\newcommand{\blue}[1]{#1}
\newcommand{\TOKEok}[1]{}
\newcommand{\N}{\mathbb{N}}
\newcommand{\Z}{\mathbb{Z}}
\newcommand{\R}{\mathbb{R}}
\newcommand{\T}{\mathbb{T}}
\newcommand{\maptorus}[1]{\mathsf{S}{#1}}
\newcommand{\hp}{\textnormal{Homeo}_+}
\newcommand{\hpiso}{ \textnormal{Homeo}_+^\textnormal{iso} }
\newcommand{\hporb}{ \textnormal{Homeo}_+^{\textnormal{orb}} }
\renewcommand{\phi}{\varphi}
\newtheorem{lemma}{Lemma}[section]
\newtheorem{theorem}{Theorem}[section]
\newtheorem{prop}{Proposition}[section]
\newtheorem{proposition}{Proposition}[section]
\theoremstyle{definition} 
\newtheorem{definition}{Definition}[section]
\newtheorem{example}{Example}[section]
\newtheorem{remark}{Remark}[section]
\title{Flow equivalence and isotopy for subshifts}
\author{Mike Boyle}
\author{Toke Meier Carlsen}
\author{S\o ren Eilers}
\date{\today}
\numberwithin{equation}{section}
\begin{document} 
\begin{abstract}
We study basic properties of flow equivalence on one-dimensional compact metric spaces with a particular emphasis on isotopy in the group of (self-) flow equivalences on such a space. In particular, we show that an orbit-preserving such map is not always an isotopy, but that this always is the case for suspension flows of irreducible shifts of finite type. We also provide a version of the fundamental discretization result of Parry and Sullivan which does not require that the flow maps are either injective or surjective. Our work is motivated by applications in the classification theory of sofic shift spaces, but has been formulated to supply a solid
and accessible foundation for other purposes.
\end{abstract}

\maketitle

\tableofcontents

\section{Introduction}

In this paper we set out some basic results
around flow equivalence and isotopy
involving flows \blue{without fixed points} on one-dimensional
compact \blue{metric} spaces 
and the return maps to their
cross sections. 
Our motivation is to provide a solid
and accessible foundation for other work.

The study of flow equivalence of shifts of
finite type (SFTs) is very well understood and has had profound applications to
$C^*$-algebras. We use several results in our work on
flow equivalence within certain subshift
classes \cite{bce:gfe,bce:sofic}.
Flow equivalence of $G$ shifts {of} finite type
has provided some information about knot invariants. 

 We are also interested in the
``mapping class group'' of a subshift,
especially  of an irreducible shift of
finite type. This is the group of
self-homeomorphisms of the mapping torus
of that subshift up to isotopy,
studied in \cite{mb:fesftpf, Sompong}.

By a flow map, we mean a continuous map between
spaces with flows, which maps each domain orbit
onto some range orbit by an orientation preserving
local homeomorphism. In {Section}  \ref{sec:flows},
we give basic background on flows and cross sections. 
In Section 
\ref{sec:isotopy} we study when a flow equivalence
mapping each orbit into itself is isotopic 
to the identity within the group of flow equivalences
of a space $Y$ to itself.
Even for subshifts, this can be tricky.
Suppose $Y$ is the mapping torus of a subshift $X$  and
$f:Y\to Y$ is a flow equivalence mapping each orbit
into itself. Must $f$ be isotopic to the identity?
\blue{The answer is yes if $X$ is a minimal shift 
\cite[Theorem 2.5]{app:shgtl} or 
if $X$ is an irreducible shift of finite type
 (Theorem \ref{isftorbittoiso}); but for a 
 reducible shift of finite type or mixing sofic shift,
the answer is no (Examples \ref{badsftexample},\ref{badsoficexample}).}  
The main  criterion for this isotopic triviality
is given in Theorem \ref{isoflowprop};
it should be known, but we haven't found 
\blue{its statement in the literature, despite the abundance of 
related results.}

In {Section} \ref{sec:ps}, we give a formulation and extension
of the key argument of the Parry-Sullivan paper
 \cite{parrysullivan}
which is the basis for connecting the dynamics of
one-dimensional flows and the discrete systems given
by return maps to cross sections. In
particular,  
 we give a version applicable to 
flow maps which are neither surjective
nor injective (which we need in 
\cite{bce:sofic} to study flow equivalence of sofic
shifts via their canonical SFTs covers{)}. 
In {Section} 
\ref{sec:flowcodes}, we introduce flow codes,
which play for flow equivalence of subshifts
the role block codes
play for homomorphisms  of subshifts.

In Section \ref{floweqconjsec},
given a flow equivalence of irreducible SFTs respecting
lengths of finite orbits, we show it is induced  by a
conjugacy, and use this to prove Theorem \ref{isftorbittoiso}.  
Also, given a flow equivalence
of systems $Y,Y'$ with cross sections $C,C'$, 
we characterize when a flow equivalence $Y\to Y'$
can be lifted to an equivalence 
$C \times \R \to C' \times \R$ of their covering spaces. 

In Section \ref{sec:extension}, for flows on
one-dimensional spaces we prove two extension
results. An isotopically trivial map on a subflow
can be extended to an isotopically trivial map on the
entire flow. A cross section of a subflow can be
extended to a cross section of the entire flow. 

Some results are a
stripped down version  of ideas and results from the  
theory of smooth flows on hyperbolic sets 
(see \cite[Sections 2.2, 2.9 and 
19.2]{hk:book}), as we indicate.   
{We have given independent proofs
  for these results, because the
   smooth statements don't include the zero-dimensional case;
  some smooth arguments do not translate
  mechanically to the zero-dimensional setting;
  and some arguments adequate for dimension zero are much shorter
  and easier.} 

\blue{The one-dimensional spaces $Y$ we study can be considered as 
(a quite special class  of) tiling spaces. The
 large literature on tiling spaces contains results 
(see e.g. in \cite{JS}) 
 which specialize to imply some of our statements in cases,  
e.g. when the 
suspension flow on $Y$ is minimal.}

{\subsection*{Notation}
When $L_1$ and $L_2$ are two sets of words, then we let $L_1L_2$ denote the set $\{uv:u\in L_1,\ v\in L_2\}$.
When $L$ is a set of words, then we let $L^*$ denote the set of words that are concatenations of zero or more elements from $L$ (so the empty word is an element of $L^*$).}

{\subsection*{Acknowledgements}
We thank Sompong Chuysurichay for valuable comments}
\blue{on} {an earlier draft of this paper.} {This work was supported by the Danish National Research Foundation through the Centre for Symmetry and Deformation (DNRF92), and by VILLUM FONDEN through the network for Experimental Mathematics in Number Theory, Operator Algebras, and Topology.}

\section{Flows and cross sections} \label{sec:flows}

{In this section, we give basic background on flows and cross sections.}

\begin{definition}\label{flowdef} 
A {\it flow} in this paper is a continuous and {\it fixed point free}\footnote{{For flows with fixed points, flow equivalence (as in {Definitions} \ref{fenotation}) is a much weaker relation than for flows {without} fixed points (see e.g. \cite{MarcusReparam}), and compactness arguments are much less effective.}} action of $\R$ on a nonempty compact metric\footnote{In this paper, the choice of metric compatible with the topology won't matter.}
space. A flow on a compact metric space $Y$ is given by a continuous map $\gamma: Y\times \R\to  Y$ such that for all $s,t$ in $\R$ and $y$ in $Y$, $\gamma (\gamma (y,s),t) = \gamma (y, s+t)$, and $\gamma (y,0) = y$. For $t$ in $\R$, $y\mapsto \gamma (y,t)$ defines the time $t$ homeomorphism $\gamma_t : Y\to Y$. 
\end{definition}  

\begin{definition} \label{mapdef} 
{Given a compact metric space $X$,  
a homeomorphism $T: X\to X$, and 
a continuous function $f : X\to (0, \infty)$}, 
let $Y$ be the quotient of $\{ (x,t): 0\leq t\leq f(x)\}$ 
by the identifications $(x,f(x)) \sim (T(x),0)$ for 
all $x$ in $X$. 
{The map $\gamma_t : ((x,s),t) \mapsto (x,s+t)$ is a flow on $X\times \R$, which commutes with the $\Z$ action generated by $(x,t)\mapsto (T(x),t-f(x))$. The space $Y$ can be presented as the orbit space of this $\Z$ action. The flow on $X\times \R$ 
induces a flow on $Y$. This is one construction {of the} {\it flow under a function}. This presentation generalizes to other groups.}

{The space} $X$ is the {\it base} 
and $f$ is the {\it ceiling function}. 
In the case that $f$ is the 
constant function 1, $Y$ is the {\it mapping torus 
of $T$}, which (abusing notation) we denote 
$\maptorus{X}$. {The induced flow on $\maptorus{X}$ is the {\it suspension of $T$}}.
\end{definition} 

\begin{definition}\label{crosssecdefn} 
\cite[Sec. 7]{Schwartzman57}
Let $\gamma : Y \times \R \to Y$ be a flow as in Definition
{\ref{flowdef}}. A {\it cross section} to the flow is a closed 
subset $C$ of $Y$ such that the restriction of $\gamma$ to 
$C\times \R \to Y$ is a surjective local homeomorphism. 
 \end{definition}  

\begin{remark} \label{equivtocross} 
For a flow $\gamma$ on $Y$, it
 is not difficult to check that the following are equivalent 
conditions  on a closed subset $C$ of $Y$.  
\begin{enumerate} 
\item $C$ is a cross section to the flow. 
\item 
$\gamma : C\times \R \to Y$ is surjective, 
and there exists $\delta >0$ such that 
$\gamma : C \times (-\delta, \delta )\to {\gamma(C \times (-\delta, \delta ))}$ is a local homeomorphism. 
\item 
$\gamma : C\times \R \to Y$ is surjective, 
and there exists $\delta >0$ such that 
$\gamma : C \times (-\delta, \delta )\to {\gamma(C \times (-\delta, \delta ))}$ is a  homeomorphism. 
\item $\gamma : C\times \R \to Y$ is surjective, 
and there is a well defined continuous return time function 
$\tau_C : C \to \R$, given by
$\tau_C(x) = \min \{t>0: \gamma_t(x) \in C\}$.
\item $\gamma : C\times \R \to Y$ is surjective, 
and there is a well defined return time function 
$\tau_C : C \to \R$, given by
$\tau_C(x) = \min \{t>0: \gamma_t(x) \in C\}$, which is bounded away from 0. 
\end{enumerate} 
\end{remark}

For the flow under a function in Definitions
\ref{mapdef}, the image of $X\times \{ 0\}$ in $Y$ 
is a cross section for the flow. Conversely, 
by Remark \ref{equivtocross}, if $C$ 
is a cross section to a flow $\gamma $ on $Y$, then 
the flow $\gamma $ is topologically conjugate 
(as defined in {Definition} \ref{fenotation}) to the flow under 
a function built with base $C$ and ceiling function the 
return time function on $C$.

Large classes of flows will not admit a cross section 
(see e.g. \cite{Reeb52, Schwartzman57, Fried82cross}), 
but a flow on a one{-}dimensional 
space will always admit a cross section. 
This fact,  stated in Proposition \ref{zerocross} below, 
is the analogue for continuous flows on compact 
one{-}dimensional metric spaces of the Ambrose-Kakutani 
Theorem \cite{Ambrose1941,
AmbroseKakutani1942} 
for aperiodic measure preserving flows on a 
standard probability space.  

We will be using the following basic
tool for studying flows.

\begin{theorem} \label{localcross} \cite[Thm. V.2.15]{ns:book} 
  Suppose $\gamma: Y\times \R \to Y$  is a flow on a 
   compact metric space $Y$,  
 $p\in  Y$, $T>0$ and $|t| \leq 4T \implies
  \gamma (p,t) \neq p$.

  Then there exists a closed set $F$
  in $Y$ such that $\gamma $ maps $F\times [-T,T]$ homeomorphically
  onto a neighborhood of $p$.
\end{theorem}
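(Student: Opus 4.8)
The plan is to produce a continuous ``flow-time'' function $h\colon Y\to\R$ which is strictly increasing along every orbit near $p$, and then to take $F$ to be the intersection of a small closed ball about $p$ with the level set $h^{-1}(h(p))$; the flow box is then $\gamma(F\times[-T,T])$. To build $h$, first observe that since $\gamma(p,t)\neq p$ for $0<|t|\le 4T$, the map $u\mapsto\gamma(p,u)$ is injective on $[-2T,2T]$ (an equality $\gamma(p,u)=\gamma(p,u')$ forces $\gamma(p,u-u')=p$ with $|u-u'|\le 4T$), so $J:=\{\gamma(p,u):|u|\le 2T\}$ is an embedded arc and $g\colon J\to[-2T,2T]$, $g(\gamma(p,u))=u$, is a homeomorphism (a continuous bijection of compacta). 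Extend $g$ to a continuous $\tilde g\colon Y\to\R$ by the Tietze extension theorem, fix $\epsilon\in(0,T]$, and set
\[
h(x)=\int_{-\epsilon}^{\epsilon}\tilde g(\gamma(x,s))\,ds .
\]
Then $h$ is continuous on $Y$, and $t\mapsto h(\gamma(x,t))=\int_{t-\epsilon}^{t+\epsilon}\tilde g(\gamma(x,u))\,du$ is $C^1$ with derivative $D(x,t)=\tilde g(\gamma(x,t+\epsilon))-\tilde g(\gamma(x,t-\epsilon))$. Since $D(p,0)=g(\gamma(p,\epsilon))-g(\gamma(p,-\epsilon))=2\epsilon>0$, continuity yields $r_1>0$ and $\delta\in(0,T]$ with $D\ge\epsilon$ on $\overline{B}(p,r_1)\times[-\delta,\delta]$, so $t\mapsto h(\gamma(x,t))$ is strictly increasing on $[-\delta,\delta]$ for every $x\in\overline{B}(p,r_1)$.

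Next I would shrink the ball. A compactness argument using the hypothesis gives $r\in(0,r_1]$ such that $\gamma(x,s)\notin\overline{B}(p,r)$ whenever $x\in\overline{B}(p,r)$ and $\delta\le|s|\le 2T$ (otherwise extract $x_n\to p$ and $s_n\to s_*$ with $\delta\le|s_*|\le 2T$ and $\gamma(p,s_*)=p$, contradicting aperiodicity since $|s_*|\le 2T\le 4T$). Put $c=h(p)$ and $F=\overline{B}(p,r)\cap h^{-1}(c)$; this is compact and contains $p$. If $\gamma(x,t)=\gamma(x',t')$ with $x,x'\in F$, $t,t'\in[-T,T]$, then $\gamma(x,s)=x'\in F$ with $s=t-t'\in[-2T,2T]$; if $0<|s|\le\delta$ the strict monotonicity of $h$ along the orbit of $x$ gives $h(\gamma(x,s))\neq c$, and if $\delta<|s|\le 2T$ the choice of $r$ gives $\gamma(x,s)\notin\overline{B}(p,r)$ — either way $\gamma(x,s)\notin F$, a contradiction, so $t=t'$ and $x=x'$. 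Hence $(x,t)\mapsto\gamma(x,t)$ is a continuous injection of the compact space $F\times[-T,T]$, thus a homeomorphism onto its (closed) image. Finally, to see this image is a neighborhood of $p$: since $t\mapsto h(\gamma(p,t))$ is strictly increasing on $[-\delta,\delta]$ with value $c$ at $0$, there is an open $W\ni p$ with $\gamma(W\times[-\delta,\delta])\subseteq\operatorname{int}\overline{B}(p,r)$ and $h(\gamma(y,-\delta))<c<h(\gamma(y,\delta))$ for all $y\in W$; the intermediate value theorem applied to the strictly increasing function $t\mapsto h(\gamma(y,t))$ then produces $t(y)\in(-\delta,\delta)$ with $\gamma(y,t(y))\in F$, whence $y=\gamma(\gamma(y,t(y)),-t(y))\in\gamma(F\times[-T,T])$, so $W\subseteq\gamma(F\times[-T,T])$.

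I expect the main obstacle to be the construction of $h$ itself: in the purely topological setting one cannot integrate a transverse vector field, and the averaging of an arbitrary Tietze extension of the orbit parameter is precisely the device that manufactures a function genuinely monotone along nearby orbits. The remaining work is the bookkeeping that separates coincidences $\gamma(x,t)=\gamma(x',t')$ by the size of the return time $|t-t'|$ — small ones killed by monotonicity of $h$, large ones by aperiodicity — and it is exactly to accommodate both the embeddedness of the arc $J$ and the exclusion of return times up to $2T$ that the hypothesis is stated with $4T$ rather than a smaller multiple of $T$.
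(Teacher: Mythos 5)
Your proof follows the classical Bebutov/Whitney argument that underlies the cited result of Nemytskii--Stepanov: build a transverse ``time'' function by taking a Tietze extension of the orbit parameter and averaging it along the flow, then cut along a level set. The paper itself gives no independent proof — it defers to \cite[Thm.\ V.2.15]{ns:book} and records only, in the ensuing remark, how the conclusion there (a neighborhood contained in $\gamma(F\times[-2\tau,2\tau])$ with injectivity on $F\times[-T,T]$) yields the stated form — so what you have written is a self-contained version of the cited argument rather than a different route. The injectivity bookkeeping (small returns $0<|s|\le\delta$ killed by strict monotonicity of $h$, returns $\delta<|s|\le 2T$ killed by the no-early-return radius $r$) is carried out correctly, and your remark about why $4T$ rather than $2T$ appears in the hypothesis is accurate.

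There is, however, one unjustified claim in the neighborhood step. You assert ``there is an open $W\ni p$ with $\gamma(W\times[-\delta,\delta])\subseteq\operatorname{int}\overline{B}(p,r)$,'' but nothing in the choices of $\delta$ (from the monotonicity estimate) and $r$ (from the no-return compactness argument) ensures $\gamma(\{p\}\times[-\delta,\delta])\subseteq B(p,r)$; the orbit segment through $p$ of time-length $2\delta$ may well escape the $r$-ball. The fix is routine and does not disturb the rest of the argument: after fixing $\delta$ and $r$, shrink $\delta$ to some $\delta'\in(0,\delta]$ with $\gamma(\{p\}\times[-\delta',\delta'])\subseteq B(p,r)$ and run the intermediate-value argument on $[-\delta',\delta']$ (strict monotonicity still holds there, and $|t(y)|<\delta'\le T$); the injectivity case split at $\delta$ is unaffected. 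Alternatively, observe directly that the IVT root $t(y)$ satisfies $t(y)\to 0$ as $y\to p$ (any subsequential limit $t_*$ of $t(y_n)$ for $y_n\to p$ satisfies $h(\gamma(p,t_*))=c$ with $|t_*|\le\delta$, forcing $t_*=0$ by strict monotonicity), so $\gamma(y,t(y))\to p$ and lands in $\overline{B}(p,r)$ for $y$ near $p$. Either repair closes the gap.
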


\begin{remark}
  If $F$ is a closed set in $Y$, $J$ is an interval
  and $\gamma$ is injective on $W= F\times J$ and 
  $\gamma (W)$ has nonempty interior, 
  then
 $F$ is called a {\it local section} (or {\it local cross section}) and 
  $\gamma (W)$ is called a {\it flowbox}
  (a flowbox neighborhood for points in its interior).
  In  settings with more regularity (differentiable,
  Lipschitz, {\dots}) more {conditions} might be demanded
  of the flow in the flowbox.
\end{remark} 
\begin{remark} 
The statement of 
\cite[Thm. V.2.15]{ns:book}   does not quite cover the
statement of Theorem \ref{localcross}. However, the proof
of \cite[Thm. V.2.15]{ns:book} 
finds for arbitrarily small $\tau>0$ a closed set $F$ in $Y$ 
such that $\gamma (F\times [-2\tau, 2\tau ])$ contains 
a neighborhood of $p$ 
and $\gamma $ is injective on $F\times [-T,T]$. This
neighborhood contains
{$\gamma (F\times [-(T-4\tau ),T-4\tau])$}. Beginning with
some $T+\epsilon $ in place of $T$ we get Theorem \ref{localcross}. 
\end{remark}

Proposition \ref{zerocross} below is  well known, 
and is a special case of more general results 
\cite{AartsMartens1988, KeynesSears1979}, but for completeness we will 
include a 
short 
proof.  
By ``dimension'', we mean  
covering dimension (but for compact metric spaces, various 
standard conditions agree \cite{HWbook}). We let $B_{\delta}(y)$ denote 
the open ball with radius $\delta $ and center $y$. 

\begin{proposition}\label{zerocross}
 Suppose $\gamma $ is a flow on a 
one-dimensional compact metric space $Y$. Then the flow 
has a cross section, and every cross section of the flow is 
zero-dimensional. 
\end{proposition}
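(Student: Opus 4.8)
The statement has two parts — existence of a cross section, and zero-dimensionality of every cross section — and both would be driven by Theorem \ref{localcross}, which manufactures local sections, together with the hypothesis $\dim Y\le 1$, which enters through the dimension-theoretic fact that multiplying a non-empty compact metric space by a closed interval raises covering dimension by exactly one. For existence the plan is to cover $Y$ by finitely many \emph{open} flowboxes sitting over zero-dimensional local sections and then glue those local sections into a single cross section by a ``first index wins'' trimming; for the second part, once one has the flowbox structure around a given cross section, zero-dimensionality is a one-line dimension count.

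For existence I would first record that, since $\gamma$ is fixed point free and $Y$ is compact, periods are bounded away from $0$: there is $T>0$ with $\gamma(y,t)\neq y$ whenever $0<|t|\le 4T$ (otherwise a sequence of shrinking periods would, after passing to a convergent subsequence of base points, produce a genuine fixed point by joint continuity). Then Theorem \ref{localcross} applies at every $p$ with this uniform $T$, giving a closed local section $F_p$ with $\gamma$ a homeomorphism of $F_p\times[-T,T]$ onto a neighborhood of $p$; since this neighborhood is a subspace of $Y$, the product theorem forces $\dim F_p\le 0$, so clopen subsets of $F_p$ form a basis. Shrinking within the topological interior of that flowbox-neighborhood, I would choose a clopen $\Phi_p\subseteq F_p$ through the point over $p$ and a $0<\delta_p<T$ so that $V_p:=\gamma(\Phi_p\times(-\delta_p,\delta_p))$ is an \emph{open} neighborhood of $p$ — it is open because inside the flowbox it is carved out by the open conditions ``flow-projection lies in the clopen set $\Phi_p$'' and ``flow-time lies in $(-\delta_p,\delta_p)$'', and we keep it inside the open interior. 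Note $\Phi_p$ is then closed in $Y$, zero-dimensional, and $\gamma$ is injective on $\Phi_p\times[-T,T]$. Passing to a finite subcover $V_1,\dots,V_n$ with data $\Phi_i,\delta_i$, the cross section I propose is
\[
C:=\Psi_1\cup\dots\cup\Psi_n,\qquad\text{where}\qquad \Psi_i:=\Phi_i\setminus(V_1\cup\dots\cup V_{i-1}).
\]
Each $\Psi_i$ is a closed set minus an open set, so $C$ is closed. It meets every orbit: for $y\in Y$, take the least $i$ with $y\in\gamma(\Phi_i\times\R)$ and write $y=\gamma(x,t)$, $x\in\Phi_i$; if $x$ lay in some $V_j$ with $j<i$, then $y$ would lie on the orbit of a point of $\Phi_j$, against minimality, so $x\in\Psi_i\subseteq C$. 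And the return time to $C$ is bounded below by $\eta:=\tfrac12\min\{T,\delta_1,\dots,\delta_n\}$: if $z=\gamma(z',t)$ with $z\in\Psi_a$, $z'\in\Psi_b$ and $0<|t|\le\eta$, the case $a=b$ contradicts injectivity of $\gamma$ on $\Phi_a\times[-T,T]$, while (say) $b<a$ is impossible because then $z=\gamma(z',t)\in V_b$ whereas $z\in\Psi_a$ avoids $V_b$. With $C$ closed, meeting every orbit, and having return time bounded away from $0$, I would then quote Remark \ref{equivtocross} — after filling in the routine remaining point that the return time is well defined and that $\gamma|_{C\times\R}$ is a local homeomorphism — to conclude that $C$ is a cross section.

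For the second part, let $C$ be any cross section. By Remark \ref{equivtocross} there is $\delta>0$ with $\gamma$ a homeomorphism of $C\times(-\delta,\delta)$ onto the open subset $\gamma(C\times(-\delta,\delta))$ of $Y$ (open since $\gamma|_{C\times\R}$ is a local homeomorphism, hence an open map); restricting to the closed subset $C\times[-\delta/2,\delta/2]$ identifies it with a subspace of $Y$, so $\dim\bigl(C\times[-\delta/2,\delta/2]\bigr)\le\dim Y\le 1$. Since $C$ is non-empty (because $\gamma(C\times\R)=Y$) and the closed-interval factor raises covering dimension by one, $\dim C+1\le 1$, i.e.\ $C$ is zero-dimensional. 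I expect the only genuinely delicate point to be the gluing step of the existence part: one must organise the trimmed pieces so that their union is \emph{simultaneously} closed — which is exactly what forces us to subtract \emph{open} flowboxes, and hence to arrange openness of the $V_i$ from the outset — and endowed with a uniformly positive return time; the dimension-theoretic ingredients, by contrast, are standard and immediate.
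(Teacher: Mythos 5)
Your overall strategy is the paper's: get zero-dimensional local sections from Theorem~\ref{localcross} plus Hurewicz's product theorem, cover $Y$ by finitely many \emph{open} flowboxes over them, trim, and glue; and your dimension count for the second assertion (which the paper leaves implicit) is exactly right. But the ``first index wins'' trimming $\Psi_i:=\Phi_i\setminus(V_1\cup\dots\cup V_{i-1})$ has a genuine gap: you subtract \emph{open} flowboxes, so $\Psi_i$ is only \emph{closed} in $\Phi_i$, not relatively clopen, and that is not enough. Concretely, take $Y=X\times(\R/\Z)$ with $X$ a Cantor set and the unit-speed flow. Let $\Phi_1=X\times\{0\}$ with $\delta_1=0.2$, let $\Phi_2=\{(x,f(x)):x\in X\}$ for a continuous $f:X\to(0.1,0.3)$ whose sublevel set $\{f<0.2\}$ is open but not clopen, with $\delta_2=0.2$, and pad with $\Phi_3=X\times\{0.5\}$, $\Phi_4=X\times\{0.8\}$ so that the $V_i$ cover $Y$. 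Then $\Psi_2=\{(x,f(x)):f(x)\ge 0.2\}$ has a boundary point $z=(\bar x,0.2)$ in $\Phi_2$, approached by $(x_n,f(x_n))$ with $f(x_n)<0.2$; these approaching points lie in no $\Psi_j$, so on their orbits the next hit of $C$ after level $0$ jumps from level $0.2$ to level $0.5$. Thus $\tau_C(x_n,0)=0.5$ while $\tau_C(\bar x,0)=0.2$ even though $(x_n,0)\to(\bar x,0)$ in $C$: the return time is discontinuous and $C$ is not a cross section. The ``routine remaining point'' you defer (that $\gamma|_{C\times\R}$ is a local homeomorphism, equivalently that $\gamma(C\times(-\delta,\delta))$ is open) is precisely where this breaks.

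The paper's trimming is engineered to avoid exactly this. It makes the pieces \emph{pairwise disjoint as subsets of $Y$} by removing, from $C_1$, a finite union of \emph{clopen} subsets $W_j\subset C_1$ that cover the compact sets $C_1\cap C_j$ and sit inside $U_j$; this is possible because the local sections are zero-dimensional and hence have a clopen basis. The trimmed $C_1$ stays relatively clopen in its local section, so after shrinking the flow parameter one gets pairwise disjoint \emph{open} flowboxes $U_j=\gamma(C_j\times(-\epsilon,\epsilon))$, and $\gamma:C\times(-\epsilon,\epsilon)\to\bigcup_j U_j$ is a homeomorphism onto an open set — which is what makes $C$ a genuine cross section. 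The repair to your argument is to replace your subtraction of the open sets $V_j$ by a subtraction of clopen sets in $\Phi_i$ covering the compact intersections $\Phi_i\cap\Phi_j$, exactly as in the paper.
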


\begin{proof}
  Pick $T>0$ with $4T$ smaller than the period of any point. 
  Given $y\in Y$, let $U= \gamma (F \times [-T,T])$
  be a flowbox neighborhod of $y$ as in Theorem \ref{localcross}.   
Being an injective map between 
compact Hausdorff spaces,
the restriction of $\gamma $ to
$F \times [-T,T]$ 
 is a homeomorphism onto 
$U$.  Because $Y$ is one-dimensional, 
it follows that $F$ is zero-dimensional\footnote{
Already in 
  \cite{Hurewicz1930}, 
Hurewicz proved { that a 
product of $n$ one{-}dimensional compact metric
  spaces} has dimension 
 at least $n$.}.   Take $C$ relatively clopen in $F$
such that $\gamma (C\times (-T, T ))$ 
is an open neighborhood  $U$ of $y$. 

Let $\{ U_i\} = \{ {\gamma (C_i \times (-T, T))} : 1\leq i
\leq N\}$ be a finite collection of such sets whose union covers $Y$. 
Given $j\neq 1$ and $p$ in $C_1\cap C_j$, because $U_j$ is open 
there will be a relative neighborhood $W_{p,j}$ of $p$ in $C_1$ such
that $W_{p,j}
\subset U_j$. We take $W_{p,j} $ clopen in $C_1$; by compactness, 
a finite union $W_j$ of such sets covers $C_1\cap C_j$. 
Replace 
$C_1$ with $C_1 \setminus \cup_{j>1} W_j$. Now 
$C_1 $ is disjoint from the other $C_j$; for a small $T_1>0$,  
$\gamma (C_1 \times (-T_1,T_1 ))$ is still open in $Y$
and is disjoint from the sets $\gamma (C_j \times (-T_1,T_1 ))$, 
and  every orbit still intersects $\cup_i C_i$.
Iterating this move,
we find $\epsilon = T_N>0$ 
and compact disjoint zero-dimensional sets $C_1, \dots , C_N$ such that 
the open sets 
$U_j = \gamma (C_j \times (-\epsilon, \epsilon ))$ are disjoint
and every orbit hits some $C_j$. 
Let $C = \cup_j C_j$. Then 
$\gamma : C\times \R \to Y$ is  surjective
and $\gamma : C\times (-\epsilon, \epsilon)  \to Y$ 
is a  homeomorphism. Therefore $C$ is a cross section. 
\end{proof} 

\begin{definition}\label{fenotation} 
A {\it homomorphism} of flows $(Y_1,\gamma_1) \to 
(Y_2,\gamma_2)$ 
is a continuous map 
$h: Y_1 \to Y_2$  such that 
$h(\gamma_1 (y,t)) = \gamma_2 (h(y),t)$ 
 for all $t\in \R $ and 
all $y\in Y_1$.   
An  {\it epimorphism} (or {\it semiconjugacy}) of flows is a 
surjective homomorphism of flows; an 
{\it isomorphism} (or {\it conjugacy} or {\it topological conjugacy}) of flows 
is a homomorphism of flows
defined by a homeomorphism\footnote{In our setting of 
compact metric spaces, a bijective 
homomorphism of flows must be a topological conjugacy.}.  

A {\it flow map } 
is a continuous map 
$h: Y_1 \to Y_2$  such that for every $y$ in $Y_1$, 
the restriction of {$h$} to the $\gamma_1$ orbit 
of $y$ is an orientation-preserving local homeomorphism onto the 
$\gamma_2$ orbit of $h(y)$. 
A {\it semiequivalence of flows} 
(or {\it flow semiequivalence})  is a 
surjective flow map. 
An {\it equivalence of flows} 
(or {\it flow equivalence})  is a 
semiequivalence of flows defined by a homeomorphism 
(in our compact metric setting, a bijective flow   map). 

By a {\it flow equivalence} of two homeomorphisms, we 
mean an equivalence of the suspension flows on their
mapping tori. 
\end{definition} 
For example, suppose flows on $Y_1, Y_2$ 
are built as flows under continuous positive
functions 
$\phi_1, \phi_2$ with the same base homeomorphism $T: X\to X$.   
Then there is a  flow equivalence $Y_1\to Y_2$ 
which is an extension of the identity
map between the bases $X\times \{0 \}$.  





%


\begin{remark}\label{fenotationremark} 
Our choice of terminology for equivalence 
in Definitions \ref{fenotation} 
follows \cite{jf:fesft} and \cite[Sec. 4.7]{Robinsonedition1}. 
It is 
well adapted to our topic 
of considering when two maps are flow equivalent 
(terminology from \cite{parrysullivan} and perhaps earlier): 
we naturally want to refer to a morphism by which two   
maps are flow equivalent as a flow equivalence. 
Caveat: various other terminologies have been used by 
different  authors (e.g. \cite{Smale1967,Irwin,hk:book}). For example,   ``topological conjugacy'' 
in this paper 
 is ``$C^0$ flow equivalence''  in \cite[Sec. 2.2]{hk:book}, 
and ``flow equivalence'' in this paper 
is 
``$C^0$ orbit equivalence'' 
 in \cite[Sec. 2.2]{hk:book}.  

With the term ``morphism'' of flows committed by our use of 
isomorphism of flows, we end up using ``flow map'' for the corresponding 
notion related to flow equivalence.  
\end{remark}

\begin{proposition} 
\label{equivcrossconditions}
Suppose $\pi: Y \to Y'
$  is {a flow map},  
with $C,C'$ subsets of $Y, Y'$ such that 
$C= \pi^{-1}(C')$. {If $\pi$ is surjective, then} the following are equivalent. 
\begin{enumerate} 
\item 
$C$ is a cross section of $Y$. 
\item 
$C'$ is a cross section of $Y'$. 
\end{enumerate} 
{In general (i.e., if $\pi$ is not assumed surjective), if}
$C' $ is a cross section 
for $Y'$,  then $C$ is a cross section for $Y$. 
\end{proposition}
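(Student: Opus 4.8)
The plan is to verify that $C=\pi^{-1}(C')$ satisfies condition~(5) of Remark~\ref{equivtocross}: it is closed, $\gamma\colon C\times\R\to Y$ is surjective, and its return time function $\tau_C$ is well defined and bounded away from $0$ (write $\gamma,\gamma'$ for the flows on $Y,Y'$). Closedness is immediate from continuity of $\pi$. Two features of the data will be used throughout: first, $\pi$ restricts on each $\gamma$-orbit to an orientation-preserving local homeomorphism onto a $\gamma'$-orbit; second, since $C'$ is a cross section, Remark~\ref{equivtocross} supplies a $\delta'>0$ with $\tau_{C'}\ge\delta'$, so no two distinct points of $C'$ lying on a common orbit are within $\gamma'$-flow-time $\delta'$ of each other (this also makes the minimal period of the flow on $Y'$ at least $\delta'$).

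For surjectivity of $\gamma\colon C\times\R\to Y$ I will in fact show that every forward orbit in $Y$ meets $C$: given $y\in Y$, the map $\pi$ carries the orbit of $y$ onto the orbit of $\pi(y)$ preserving the flow-orientation, hence carries the forward orbit of $y$ onto the forward orbit of $\pi(y)$; the latter meets $C'$ because $C'$ is a cross section, so the forward orbit of $y$ meets $\pi^{-1}(C')=C$. (This also gives the nonempty half of well-definedness of $\tau_C$.)

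The heart of the argument is to produce a single $\delta>0$ with $\gamma\colon C\times(-\delta,\delta)\to Y$ injective, which I obtain by contradiction. If no such $\delta$ exists, there are $c_n,d_n\in C$ and $s_n$ with $0<|s_n|<1/n$ and $\gamma_{s_n}(c_n)=d_n$; relabelling $c_n\leftrightarrow d_n$ if necessary we may take $s_n>0$, and, passing to a subsequence, $c_n\to c$, whence also $d_n=\gamma_{s_n}(c_n)\to c$; put $c'=\pi(c)\in C'$. Since $C'$ is a cross section, $\gamma'\colon C'\times\R\to Y'$ is a surjective local homeomorphism, so we may choose a closed neighbourhood $F'$ of $c'$ in $C'$ and an $a'$ with $0<a'\le\delta'$ and $a'$ less than the minimal period of $Y'$, such that $\gamma'$ restricts to a homeomorphism of $F'\times(-a',a')$ onto a neighbourhood $N'$ of $c'$. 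Then $N'\cap C'=F'$ — any point $\gamma'_s(f')\in C'$ with $f'\in F'\subseteq C'$ and $|s|<a'\le\delta'$ must have $s=0$ — and every arc of a $\gamma'$-orbit contained in $N'$ lies in a single fibre $\gamma'(\{f'\}\times(-a',a'))$, by injectivity of the flowbox homeomorphism. Now fix $n$ large: by uniform continuity of the flow on $Y$ and continuity of $\pi$ at $c$, the values $\gamma_t(c_n)$ for $t\in[0,s_n]$ stay close enough to $c$ that $\pi(\gamma_t(c_n))\in N'$ for all $t\in[0,s_n]$; this path lies in the single $\gamma'$-orbit of $\pi(c_n)$, hence in a single fibre of $N'$, necessarily the one through $\pi(c_n)$. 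Its endpoints $\pi(c_n)$ and $\pi(d_n)=\pi(\gamma_{s_n}(c_n))$ lie in $C'\cap N'=F'$, so both have flow-coordinate $0$ in that fibre; but $t\mapsto\pi(\gamma_t(c_n))$ is a strictly forward-monotone parametrisation of part of the oriented orbit ($t\mapsto\gamma_t(c_n)$ is the forward parametrisation, $\pi$ is orientation-preserving on orbits, and a local homeomorphism is locally injective), so its flow-coordinate is strictly increasing on $[0,s_n]$, contradicting its being $0$ at both endpoints with $s_n>0$. Hence the desired $\delta$ exists.

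Finally, taking $\delta$ also smaller than the minimal period of $Y$, injectivity of $\gamma$ on $C\times(-\delta,\delta)$ shows that for each $c\in C$ the set $\{t>0:\gamma_t(c)\in C\}$ misses $(0,\delta)$; it is nonempty by the forward-orbit statement and closed because $C$ is closed, so it has a minimum, which is $\ge\delta$. Thus $\tau_C$ is well defined and bounded away from $0$, and Remark~\ref{equivtocross} then identifies $C$ as a cross section of $Y$. The main obstacle is the crux step: because $\pi$ reparametrises orbits, a small $\gamma$-flow-time in $Y$ need not correspond to a small $\gamma'$-flow-time in $Y'$, so the lower bound $\delta'$ for $\tau_{C'}$ cannot be transported directly along $\pi$; the remedy is to pass to the metric, via uniform continuity, in order to trap the image path inside a small flowbox, where orientation-preservation yields the contradiction. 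The ancillary points — existence of the flowbox with $F'\subseteq C'$, positivity of the minimal periods, and that orbit arcs inside a flowbox are fibres — are routine consequences of Remark~\ref{equivtocross}, Theorem~\ref{localcross} and Proposition~\ref{zerocross}.
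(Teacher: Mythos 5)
Your argument correctly establishes the implication ``$C'$ a cross section $\Rightarrow$ $C$ a cross section,'' and it does so without ever invoking surjectivity of $\pi$, so it covers both the implication $(2)\Rightarrow(1)$ and the final (nonsurjective) claim in one stroke. The flowbox mechanism is sound: you trap the short arc $t\mapsto\pi(\gamma_t(c_n))$ inside the flowbox $N'=\gamma'(F'\times(-a',a'))$, observe that an orbit arc lying in a flowbox must stay in a single fibre (since the continuous function $\mu(t)=g(t)-\sigma(t)$ takes values in the discrete set $\{s:\gamma'_s(\pi(c_n))\in C'\}$ and so is constant), and then get a contradiction with strict monotonicity of the orbit parametrisation. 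The paper's proof of the same implication is substantially shorter: it directly assumes $\tau_C$ is not bounded away from $0$, takes $x_{n_i}\to x$ with $\tau_C(x_{n_i})\to 0$, uses the local-homeomorphism-on-orbits hypothesis to produce $t_i\in(0,\tau_C(x_{n_i}))$ with $\pi(\gamma(x_{n_i},t_i))=\gamma'(\pi(x_{n_i}),\delta)$, and lets $i\to\infty$ to obtain the contradiction $\gamma'(\pi(x),\delta)=\pi(x)$ with $\tau_{C'}>\delta$. Your approach is more machinery, but it is valid; its flowbox lemma might be of independent use.

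However, there is a genuine gap: you have not addressed the implication $(1)\Rightarrow(2)$ at all. The proposition asserts that (when $\pi$ is surjective) the two conditions are \emph{equivalent}, and your whole argument is devoted to showing that $C$ is a cross section under the hypothesis that $C'$ is. The converse is where surjectivity of $\pi$ actually enters: one needs to know that every orbit of $Y'$ is the image of an orbit of $Y$ (so that $C'$ closed, every $Y'$-orbit hits $C'$, and $\tau_{C'}$ is well defined), and then one must still argue that $\tau_{C'}$ is bounded away from $0$. That last step is not a mirror image of the one you gave, because now you must transport a sequence $x'_n\in C'$ with $\tau_{C'}(x'_n)\to 0$ back to preimages $x_n\in C$, extract a convergent subsequence, and use the local-homeomorphism condition in the opposite direction to reach a contradiction with $\tau_C$ being bounded away from $0$. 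You should add this direction to complete the proof of the equivalence.
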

\begin{proof} 

{
$(2)\implies (1):$ 
Because $C'$ is a cross section, $\pi$ is a flow map, and $\pi^{-1}(C')=C$, 
the following hold: $C$ is closed, every
orbit hits $C $, and the return time  
$\tau_C(x):=\min\{t>0:\gamma_t(x)\in C\}$ is well defined for every $x\in C$.
It remains to show that $\tau_C$ is bounded away from 0. Suppose not. Then there is a sequence $\{x_n\}$ in $C$ such that $\tau_C(x_n)\to 0$. It follows from the compactness of $C$ that there is a subsequence $\{x_{n_i}\}$ and an $x\in C$ such that $x_{n_i}\to x$. Let $\tau_{C'}$ be the return function for $C'$. Choose a $\delta>0$ such that $\tau_{C'}(x')>\delta$ for every $x'\in C'$. Since the restriction of $\pi$ to the $\gamma$ orbit of $x_{n_i}$ is an orientation-preserving local homeomorphism onto the $\gamma'$ orbit of $\pi(x_{n_i})$, it follows that there is a $t_i\in (0,\tau_C(x_{n_i}))$ such that $\pi(\gamma(x_{n_i},t_i))=\gamma'(\pi(x_{n_i}),\delta)$. Then $t_i\to 0$, so $\gamma'(\pi(x_{n_i}),\delta)=\pi(\gamma(x_{n_i},t_i))\to \pi(x)$, but that cannot be the case since $\tau_{C'}(x')>\delta$ for every $x'\in C'$. Hence $\tau_C$ is bounded away from 0, and $C$ is a cross section of $Y$.}

{$(1) \implies (2)$: 
Because $C$ is a cross section, $\pi$ is a flow map, and $\pi^{-1}(C') = C$, 
the following hold:  
 $C'$ is closed; every orbit hits $C'$; the return time
$\tau_{C'}(x'):=\min\{t>0:\gamma'_t(x')\in C'\}$ is well defined for each $x'\in C'$. It remains 
to show $\tau_{C'}$ is bounded away from 0. Suppose not. Then there is a sequence $\{x'_n\}$ in $C'$ such that $\tau_{C'}(x'_n)\to 0$. Choose for each $n$, an $x_n\in C$ such that $\pi(x_n)=x'_n$. It follows from the compactness of $C$ that there is a subsequence $\{x_{n_i}\}$ and an $x\in C$ such that $x_{n_i}\to x$. Let $\tau_C$ be the return function for $C$. Choose $\delta>0$ such that $\tau_C(y)>\delta$ for every $y\in C$. Since the restriction of $\pi$ to the $\gamma$ orbit of $x_{n_i}$ is an orientation-preserving local homeomorphism onto the $\gamma'$ orbit of $\pi(x_{n_i})$, it follows that there is a $t'_i\in (0,\tau_{C'}(x'_{n_i}))$ such that $\pi(\gamma(x_{n_i},\delta))=\gamma'(\pi(x_{n_i}),t'_i)$. Then $t'_i\to 0$. So $\pi(\gamma(x_{n_i},\delta))=\gamma'(\pi(x_{n_i}),t'_i)\to \pi(x)$. Thus $\pi(\gamma(x,\delta))= \pi(x)$, from which it follows that $\gamma(x,\delta)\in C$, but that cannot be the case since $\tau_C(y)>\delta$ for every $y\in C$. Hence $\tau_{C'}$ is bounded away from 0, and $C'$ is a cross section of $Y'$.}



{\it The nonsurjective case.} If $\pi$ is not surjective and 
 $C'$ is a cross section, then $C' \cap \pi (Y)$ is  a 
cross section for the restriction of the $Y'$ flow to 
$\pi (Y)$. Therefore the final claim follows from the 
case $(2)\implies (1)$. 
\end{proof}

\begin{definition}\label{discretedefn} Suppose $T:X\to X$ is a homeomorphism of 
a compact zero-dimensional  metric space. A {\it discrete cross section} for 
$T$ is a closed subset $C$ of $X$ with a continuous 
 function $r: C \to \N$ such that 
$r (x)  = \min \{ k\in \N: T^k(x) \in C\}$ and 
$X=\{ T^k(x): x\in C, k\in \N\}$. 
\end{definition} 

In Definition \ref{discretedefn}, the function $r$ must be
bounded and locally constant on $C$. Then $X$ is 
the disjoint union of finitely many clopen sets of the form 
$T^i(C_j)$, $0\leq i < j$, with $C_j=\{ x\in C: r(x)=j \}$. 
Consequently, if $K$ is a subset of 
a zero-dimensional cross section  $C$ to a flow, 
then $K$ is a cross section to the 
flow if and only if $K$ is a discrete cross section for the 
discrete system $(C, \rho_C )$. 

If $h:Y\to Y'$ is a flow map 
(perhaps an equivalence) and $C'$ is a cross section for 
$Y'$, 
then $C=h^{-1}(C')$ is a cross section for $Y
$ and the restriction {$h|_C$} defines a morphism 
of the return maps $\rho_C , \rho_{C'}$ {(i.e.,
$h|_C$ is a continuous map from $C$ into $C'$ which intertwines 
$\rho_C$ and $ \rho_{C'}$)}. Conversely, 
if $\phi : C\to C'$ is a morphism of return maps to cross sections $C,C'$ 
for $Y, Y'$, then $\phi$ 
 extends to a flow map $h:Y\to Y'$;   
in the case that $h^{-1} (C')=C$, 
we say this flow map is {\it induced by $\phi$.} 
If $h_1,h_2$
are two flow maps induced by 
a morphism $\phi$, then there is a flow equivalence $h_3: Y\to Y$, 
which is an extension of the identity map on $C$ and is isotopic to the
identity in the 
group $\hp (Y)$ 
of orientation-preserving orbit-preserving homeomorphisms 
on $Y$ (Definitions \ref{groupdefns}), 
such that $h_1 = h_2\circ h_3$.  
\section{Isotopy} \label{sec:isotopy}

{We will now study when a flow equivalence
mapping each orbit into itself is isotopic 
to the identity within the group of flow equivalences
of a space $Y$ to itself}. We begin with some definitions. 
\begin{definition} \label{groupdefns} 
Suppose $Y$ is a compact metric  space with a fixed-point free 
continuous $\R$-action (a flow). 
\begin{enumerate} 
\item 
$\hp (Y)$ is  the group of
homeomorphisms 
of $Y$ which map flow orbits onto flow orbits, preserving the 
orientation 
given by the flow direction. This is the group of 
{self-equivalences} of the flow
on 
$Y$. 
\item 
$\hporb (Y)$ is  the 
group of 
homeomorphisms in $\hp (Y)$ 
 which map
 each flow orbit to itself. 

\item 
$\hpiso (Y)$ 
is the subgroup of $\hporb (Y)$ 
consisting of 
 the homeomorphisms isotopic in $\hporb (Y)$ 
to the identity.
\blue{\footnote{\blue{By a theorem of Aliste-Prieto and Petite
\cite{app:shgtl}, $\hpiso (Y)$ is known to be a simple group.}}}
 \\  
 (In detail:  $h\in \hpiso (Y)$ if there is a continuous 
map $H: Y\times [0,1] \to Y$ such that, with $h_t(y)= H(y,t)$, 
each $h_t \in \hp(Y)$, $h_1=h $ and $h_0=\text{Id}$.) \\
Equivalently, 
$\hpiso (Y)$ is the path component of the identity in 
$\hp (Y)$ (with $\hp (Y)$  topologized by a metric 
$\text{dist}(f,g) = \max \{
 {d}(f(y),g(y)) +
    {d}(f^{-1}(y),g^{-1}(y)) : 
y\in Y\}$, with $d$ a metric on $Y$  compatible 
with the topology). 
\end{enumerate} 
\end{definition} 

Homeomorphisms $\phi_0, \phi_1$ in 
$\hporb (Y)$ are isotopic in $\hporb (Y)$ 
if and {only if} they are connected by a path 
$\phi_t$ in $\hporb (Y)$
if and only if there is $\psi$ in  
$\hpiso (Y)$ such that 
$\phi_1 = \phi_0 \circ \psi$. 

When $Y$ in Definition \ref{groupdefns} 
is one-dimensional (i.e., has a zero-di\-men\-sio\-nal cross section),  
the composants (path connected components) of $Y$ are the 
flow orbits; so, 
an element $h$ of $\hp (Y)$ is isotopic in $\hp (Y)$ to the 
identity if and only if it is an element of 
$\hporb (Y)$  which is isotopic to the identity in 
$\hporb (Y)$, {i.e., if and only if it belongs to $\hpiso (Y)$ }. 

 Below,  the image of a point $y$ under the time $t$ map of 
a flow $\gamma$ 
is denoted 
$\gamma_t(y)$ or $\gamma (y,t)$. An ambient flow may be 
denoted by $\gamma$ without comment.

We begin with a standard example. 

\begin{example} \label{dehnexample}
 Let $T$ be the identity map on the unit 
circle. The suspension flow on the 
mapping torus of $T$ can be presented as a flow on 
the 2-torus 
$\T^2 = (\R / \Z)^2$, with 
$\gamma_t:[(x,y)] \mapsto [(x,y+t)]$. The 
(``Dehn twist'') 
toral automorphism 
$h: [(x,y)] \mapsto [(x,y+x)]$ maps each 
flow orbit to itself. But, $h$ is not isotopic 
to the identity, because the homeomorphism $h$ induces a nontrivial 
automorphism of the fundamental group of 
$\T^2$.
\qed
\end{example}

In the next proposition, 
our main  interest is in the case that $Y=\maptorus X$
with $X$ zero-dimensional. 
We use $\rho$ to denote return map and $\tau$ to 
denote return time. 

\begin{theorem} 
\label{isoflowprop}
Suppose $\gamma$ is a 
flow on a compact metric space $Y$ such that 
$\gamma$ has no fixed point, and 
$h\in \hporb (Y)$. Then the following are equivalent. 
\begin{enumerate} 
\item 
There is a continuous function $\beta : 
Y\to \R $ such that \\ 
$h(y)= \gamma_{\beta(y)}(y)$, for all $y$ in  
$Y$. 
\item 
$h\in \hpiso (Y)$. 
\end{enumerate} 
Moreover, the following hold. 
\begin{enumerate}\setcounter{enumi}{2} 
\item  
Suppose $C$ and $D$ are cross sections for $Y$ such that  
$h(C)=D$, 
 and  
there is a continuous map $\beta : C\to \R$ 
such that the following hold for all $y$ in  
$C$: 
\begin{enumerate} 
\item 
$h(y)= \gamma_{\beta(y)}(y)$,   
\item 
$\beta (\rho_C(y)) - \beta (y) = \tau_D(h(y))-\tau_C(y)$. 
\end{enumerate} 
Then $(1)$ holds.
\item 
Given $ (1)$,  
for $0\leq s \leq 1$ define 
$h_s:  Y \to  Y$ by 
the rule $h_s(y) = 
\gamma_{s\beta (y)}(y) = 
\gamma (y,s\beta (y))$. 
Then $(h_s)_{0\leq s\leq 1}$ is a 
path in 
$\hpiso (Y)$ from the identity 
to $h$.
\end{enumerate} 
 \end{theorem}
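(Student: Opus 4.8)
The plan is to obtain $(1)\Rightarrow(2)$ from the explicit path in part (4), $(2)\Rightarrow(1)$ by lifting an isotopy through the flow, and (3) by writing down an explicit continuous extension of $\beta$. I will lean on the following standing facts: $\gamma$ is fixed point free, so each orbit is a properly embedded copy of $\R$ (via $t\mapsto\gamma_t(y)$) or a circle of some period $p(y)>0$; and by Theorem \ref{localcross}, $Y$ is covered by finitely many flowboxes $\gamma(F\times(-T_0,T_0))$ with $4T_0$ below every period, inside each of which the flow-time coordinate relative to $F$ is a continuous function. For $(1)\Rightarrow(2)$ (part (4)): given $\beta$, put $h_s(y)=\gamma(y,s\beta(y))$; then $(y,s)\mapsto h_s(y)$ is continuous and $h_s$ maps each orbit into itself. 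Reading $h_s$ on the orbit of $y$ in the coordinate $t\mapsto\gamma_t(y)$ gives $g_{s,y}(t)=t+s\beta(\gamma_t(y))=(1-s)\,\mathrm{id}+s\,g_{1,y}$, where $g_{1,y}$ is $h=h_1$ in that coordinate. As $h\in\hporb(Y)$ is orientation preserving on orbits, $g_{1,y}$ is strictly increasing, hence so is the convex combination $g_{s,y}$; since $|g_{s,y}(t)-t|\le\sup|\beta|$, it is also proper (and commutes with translation by $p(y)$ in the periodic case). So each $g_{s,y}$ is an orientation-preserving self-homeomorphism of the line or circle, $h_s$ is a continuous bijection of the compact space $Y$, hence a homeomorphism, and $h_s\in\hporb(Y)$. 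Thus $(h_s)$ is a path in $\hporb(Y)$ from $\mathrm{id}$ to $h$, i.e. $h\in\hpiso(Y)$.

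For $(2)\Rightarrow(1)$: let $(h_t)$ be an isotopy in $\hporb(Y)$ with $h_0=\mathrm{id}$, $h_1=h$. Using uniform continuity of $t\mapsto h_t$ in the sup-metric, pick $0=t_0<\cdots<t_N=1$ so fine that each $g_j:=h_{t_{j+1}}\circ h_{t_j}^{-1}\in\hporb(Y)$ moves every point a distance less than the Lebesgue number of the flowbox cover. Then for every $z$, the points $z$ and $g_j(z)$ lie on a common orbit and in a common flowbox, so the flow-time coordinate there determines a small real $\delta_j(z)$ with $g_j(z)=\gamma(z,\delta_j(z))$; $\delta_j$ is continuous (locally from the flowbox, globally because the small representative is unique and $4T_0$ beats all periods). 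Composing $h=h_{t_N}=g_{N-1}\circ\cdots\circ g_0$ and tracking the displacements telescopes to $h(y)=\gamma(y,\beta(y))$ with $\beta(y)=\sum_{j=0}^{N-1}\delta_j(h_{t_j}(y))$, which is continuous. (One could instead run this lifting over $C\times[0,1]$ for a cross section $C$ and then quote (3).)

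For (3): since $h$ is an orientation-preserving self-homeomorphism of the orbit of $y\in C$ carrying $C\cap\text{orbit}$ onto $D\cap\text{orbit}$, it sends the first point of $C$ past $y$ to the first point of $D$ past $h(y)$; i.e. $h|_C$ intertwines $\rho_C$ and $\rho_D$. Now write $z=\gamma(y,t)$ with $y\in C$ and $0\le t\le\tau_C(y)$, let $g_y\colon\R\to\R$ be $h$ read on the orbit of $y$, normalized by $g_y(0)=\beta(y)$ (legitimate by (a)), and set $\tilde\beta(z)=g_y(t)-t$. Then $h(z)=\gamma(z,\tilde\beta(z))$ automatically and $\tilde\beta|_C=\beta$ at $t=0$; the sole compatibility to verify is at $z=\rho_C(y)$, i.e. $g_y(\tau_C(y))-\tau_C(y)=\beta(\rho_C(y))$. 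But $h(\rho_C(y))=\rho_D(h(y))=\gamma(h(y),\tau_D(h(y)))=\gamma(y,\tau_D(h(y))+\beta(y))$ gives $g_y(\tau_C(y))=\tau_D(h(y))+\beta(y)$ (return times below the period make this an equality, not just a congruence, in the periodic case), and the needed identity becomes exactly (b). Hence $\tilde\beta$ is a well-defined function on $Y$, continuous by the flowbox argument of the previous paragraph applied to $(y,t)\mapsto g_y(t)$ together with this gluing, and it witnesses (1).

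The step I expect to be the real work, shared by the proof of $(2)\Rightarrow(1)$ and the continuity claim in (3), is the elementary but fiddly flowbox bookkeeping: that a map close to the identity which is orbit-preserving has a canonical small, continuous flow-time displacement. One must rule out wrap-around (ensured by $4T_0$ below all periods), check that the small displacements read off in overlapping flowboxes of the finite cover agree (uniqueness of the small representative), and push continuity across the identification locus of a cross section in (3). Everything else — the convexity identity $g_{s,y}=(1-s)\mathrm{id}+s\,g_{1,y}$, properness, the compact-Hausdorff upgrade of bijections to homeomorphisms, and the telescoping sum — is routine.
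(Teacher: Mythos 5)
Your proof is correct and takes essentially the same route as the paper's: you prove $(1)\Rightarrow(2)$ via part (4) by noting that $g_{s,y}=(1-s)\,\mathrm{id}+s\,g_{1,y}$ is strictly increasing on each orbit's time coordinate (the paper's displayed inequality \eqref{orbineq} is exactly this), you prove $(2)\Rightarrow(1)$ by telescoping small, continuous flow-time displacements across a fine partition of $[0,1]$ (the paper's $\psi_i$ and $c(\cdot,\psi_i)$), and you prove (3) by extending $\beta$ along orbit parameters from the cross section with hypothesis (b) serving as precisely the gluing condition at the return. The remaining differences (Lebesgue number of a flowbox cover vs.\ an explicit injectivity radius $4T$, your ``convex combination'' phrasing vs.\ the paper's inequality) are cosmetic.
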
 

\begin{proof} 
%
%

$(1) \implies (2)${:} We will prove this implication by proving (4).
Because $\beta$ is continuous, 
$h_s$ is continuous. It remains 
to show for $0< s<1$ and 
$y \in Y$ that the restriction $h_s: \text{Orbit}(y) \to \text{Orbit}(y)$ 
is bijective and orientation preserving.

For $t\in \R$, $h_s(\gamma (y,t)) = \gamma (y, t + s\beta (\gamma
(y,t)))$. Because $s\beta $ is bounded and continuous,  
it follows that $h_s: \text{Orbit}(y) \to \text{Orbit}(y)$ is surjective. 

Suppose $\text{Orbit}(y)$ is not a circle. 
Considering $\gamma_r(y)$ and $\gamma_t (y)$, 
we see that $h_s$ is orientation preserving and injective 
on $\text{Orbit}(y)$ if and only if 
\[
r<t \implies 
r+s\beta (\gamma_r (y)) < t + s\beta (\gamma_t(y)) 
\]
which is equivalent to 
\begin{equation} \label{orbineq}
r<t \implies 
s\big(\beta (\gamma_r (y)) - \beta (\gamma_t(y)) \big) < t-r.
\end{equation} 
Because \eqref{orbineq} holds for $s=1$, it 
holds for $0<s<1$. 

Now suppose $\text{Orbit}(y)$ is a circle, 
with $p$ the smallest positive number such that 
$\gamma_p(y)=y$. The argument above, restricted 
to $r,t$ such that $0\leq r < t \leq p$, again shows 
$h_s$ is orientation preserving and injective 
on $\text{Orbit}(y)$. 

$(2)\implies (1)${:} 
 We are given a continuous 
function $H: {Y}\times [0,1] \to {Y}$, 
$(w,s)\mapsto h_s(w)$, with $h_1=h$, $ h_0=I$ and 
each $h_s \in\hporb (Y)$.  Pick $T>0$ such that 
for all $y$ the restriction of 
$\gamma$ to $ \{ y\} \times [0,4T] $ is injective. 
Appealing to uniform continuity of $H$, 
pick $\eta > 0 $ such that for any $\psi = 
h_s\circ h_r^{-1}$ with  $0\leq r < s \leq \min (1, r+ \eta )$ 
and for any $y$ in $Y$ 
there is  $c(y, \psi )$ in $(-T,T)$ such that 
$\psi (y) = \gamma (y,c(y, \psi ))$. By choice of $T$, 
the number $c(y, \psi )$ is unique, and it depends continuously 
on $y$.  Pick an integer $n> 1/\eta$. Set $\psi_0 = \text{Id}$  and for 
$1\leq i \leq n$ set 
$\psi_i =  
h_{i/n}\circ h_{(i-1)/n}^{-1}  $. 
On $Y$ define 
\[
\beta (y) =\sum_{i=1}^{n} c( \psi_{i-1}(y),\psi_i ). 
\] 
Then $\beta $ is continuous and {$h_1(y) = \gamma_{\beta (y)} (y)$}.

$(3){:}$ 
Let $c: C \times [0, \infty) \to [0, \infty )$ be the continuous function  
such that $c(x,0)=0$ and 
$h: \gamma (x,t)
\to \gamma (h(x), c(x,t) )$. 
Extend $\beta $ to all of $Y$ by defining 
$\beta$ on $U:=\{ \gamma (x,t): x\in C, 0< t < \tau_C(x)\}$ to be 
$\beta : y=\gamma(x,t) \mapsto \beta (x) + c(x,t) -t$.  By (a),  $\beta$ is
continuous on the open set $U$  
and satisfies $h(y) = \gamma_{\beta (y)} (y)$ everywhere.  
The condition (b) guarantees 
$\beta $ remains continuous on  $C$.

\end{proof}

\begin{example} \label{badsftexample}
For $Y$ the mapping torus of 
a certain reducible shift of finite type, we exhibit 
$h$ in $\hporb (Y)$
 which is not isotopic to 
the identity. 

Let $n$ be a positive integer, with $n>1$. The matrix
$A=\left(\begin{smallmatrix} 1&n\\0&1\end{smallmatrix} \right)$ 
defines an SFT $(X_A,\sigma_A)$ which consists of two fixed points 
and $n$ connecting orbits.  
Let $h$ be the homeomorphism of $X_A$ 
which acts like the shift on one connecting orbit and 
equals the identity map on the other orbits. 
Then $h$ is an automorphism of the shift 
$\sigma_A$ and induces a homeomorphism 
$\tilde h: \maptorus{X_A}\to \maptorus{X_A}$. 
Here $\tilde h(y)=\gamma_{\beta (y)}(y)$ with $\beta =1$ 
on one connecting orbit and $\beta =0$ elsewhere, 
and $\beta $ is discontinuous at the two circles in 
{$\maptorus X_A$}. If $\beta '$ is any function 
with 
 $\tilde h=\gamma_{\beta '}$, then 
$\beta' - \beta $ must 
be zero on the connecting orbits,  and 
$\beta'$ cannot be continuous.  
Therefore $h$ is not isotopic to the identity. 
\qed 


%
\end{example}

\begin{example} \label{badsoficexample}
  {We will now present} an example of a mixing sofic shift $X$ 
with an element $h$ of $\hporb ({\maptorus X})$ which is not 
isotopic to the identity.

Let $(X_A,\sigma_A)$ be the {full} two-shift on symbols $a,b$.  Let $X$ be 
the image of $X_A$ 
under the factor map 
$\pi$ which collapses the two fixed points 
$a^{\infty},b^{\infty}$ of $X_A$ 
to a single fixed point $q$ and which collapses no other points. 
The quotient system $(X,\sigma)$ is topologically conjugate to a 
  mixing sofic shift; more precisely,
  a mixing near Markov shift \cite{mbwk:amsess}.
Define a locally constant function $g$ on $X_A$ by the rule 
$g(x) = 0$ if $x_0x_1=aa$, 
$g(x) = 1$ if $x_0x_1=bb$, 
$g(x) = 1/2 $ if $x_0\neq x_1$. 
For 
$0\leq t \leq 1$ and $x\in X_A$, define  
$h: {{\maptorus{X}}_A} \to {{\maptorus{X}}_A}$ by 
\[
h:  [ (x,t) ] \mapsto [( x,  t+(1-t)g(x) + t g(\sigma_A (x) )]. 
\] 
The definitions at $[x,1]$ and $[\sigma_A( x),0]$ 
are consistent, 
and $h$ is continuous. Then $h$ is a self-equivalence of 
the flow on ${{\maptorus{X}}_A}$ because 
for $0\leq r < s \leq 1$, we have 
\begin{multline*} 
 {\big( s+ (1-s)g(x) +sg(\sigma_Ax)\big)} 
- {\big( r+ (1-r)g(x) +rg(\sigma_Ax)\big)} 
\\
= {\big( s-r \big) \big( 1 + g(\sigma_A x) - g(x)  \big)} 
 >0 
\end{multline*}  
because $|g(\sigma_A x)-g(x)|\leq 1/2$. The function 
$\beta $ defined in the notation above by 
$[ (x,t) ] \mapsto (1-t)g(x) + t g(\sigma_A (x) ) $ 
is a  continuous function on ${{\maptorus{X}}_A}$ such that 
$h(y)= \gamma (y, \beta(y) )$. 

Now define $\overline h  : {\maptorus{X}}\to {\maptorus{X}}$ by 
$\overline h (\pi (y)) = \pi  (h(y))$. 
The map $\overline h$ is well defined because 
$\pi (y) = \pi (y') \implies \pi (h(y))=\pi (h(y'))$. 
It follows that  $\overline h  \in 
\hporb ({\maptorus{X}})$. 

Suppose $\overline h \in \hpiso  ({\maptorus{X}})$. 
Then by Theorem \ref{isoflowprop} there 
is a continuous function $b 
: {\maptorus{X}} \to \R$ such that 
$\overline h (y) = \gamma (y, b(y) )$ for all $y \in {\maptorus{X}}$. 
Define $\tilde b = b \circ \pi\in C({\maptorus{X}_A}, \R)$. 
Then  $ h (y) = \gamma (y, \tilde b(y) )$ for all $y \in {\maptorus{X}}$. 
We must have  $\tilde b =  \beta$ on the set of aperiodic points;
by density of this set and continuity, we must then have
$\tilde b =  \beta$  everywhere.  
But this is impossible, since $\beta (a^{\infty}) - \beta(b^{\infty}) \neq 0$. 
This contradiction shows that $\overline h$ is not 
isotopic to the identity. 
\qed\end{example}

\section{The Parry-Sullivan {argument}} \label{sec:ps}

Theorem \ref{pstheorem} below is a formulation 
and extension of the key argument 
(in our opinion) of the Parry-Sullivan paper \cite{parrysullivan}. 
That argument  is the heart of the matter; still, 
Theorem \ref{pstheorem} adds two features to the 
content of \cite{parrysullivan}. First,  
Parry and Sullivan considered only the invertible (flow equivalence)
case of Theorem \ref{pstheorem}; 
Theorem \ref{pstheorem}
is not restricted to invertible maps
(because our study of flow equivalence via canonical covers
\cite{bce:sofic} forces us to consider noninvertible maps). 
 Second, we include
in Theorem \ref{pstheorem} an explicit statement
about isotopic triviality (not needed by Parry and Sullivan), 
with an eye to the mapping class group
of a subshift, especially an irreducible shift of finite
type.
We also give a  detailed proof of
Theorem \ref{pstheorem}, 
 to complement 
 the succinct argument in \cite{parrysullivan},
 for those of us with a less direct pipeline to
 topological truth.
 
Another version of the Parry-Sullivan  theorem (for irreducible Mar\-kov shifts, 
with a different argument and formulation) 
is given  in \cite[Section V.5]{parrytuncelbook}.  

We begin with a lemma.  
Condition $(1)$ 
in Lemma \ref{disjointify} is not needed 
to prove Theorem \ref{pstheorem}. 
The group 
$\hpiso (Y)$ in the statement  is defined 
in Definitions \ref{groupdefns}.

\begin{lemma} \label{disjointify} 
Suppose $C, C'$ are cross sections for 
 a flow $\gamma $ on  a one-dimensional space $Y$. Then there exists 
$q\in \hpiso (Y)$ such that the following hold.  
\begin{enumerate} 
\item
There is a finite subset $T$ of $\R$ such that \\ 
$q(C') \subset \{\gamma (x, t) : x\in C , t\, \in T \}$. 
\item 
{$q(C')\cap C = \emptyset$.}  
\end{enumerate} 
The homeomorphism $q$ can be chosen arbitrarily close to the identity. 
\end{lemma}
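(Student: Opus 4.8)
The plan is to first reduce to the zero-dimensional combinatorial picture and then perturb $C'$ off $C$ one flowbox at a time. Fix a common refinement: by Proposition \ref{zerocross} both $C$ and $C'$ are zero-dimensional, and we may pick $\epsilon>0$ so small that $\gamma\colon C\times(-\epsilon,\epsilon)\to Y$ and $\gamma\colon C'\times(-\epsilon,\epsilon)\to Y$ are both homeomorphisms onto open sets. Cover $C'$ by finitely many clopen pieces $C'_1,\dots,C'_N$, each small enough that $\gamma(C'_j\times(-\epsilon,\epsilon))$ meets only ``one sheet'' of the $C$-flowbox structure, i.e. $C'_j$ is contained in a single flowbox $\gamma(K_j\times(-\epsilon,\epsilon))$ with $K_j$ clopen in $C$. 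On $C'_j$ there is then a continuous (hence, $C'_j$ being zero-dimensional, locally constant) ``height above $C$'' function $t_j\colon C'_j\to(-\epsilon,\epsilon)$ with $\gamma(x,-t_j(x))\in C$; after further clopen subdivision we may assume each $t_j$ is constant. This already gives a candidate finite set $T=\{t_1,\dots,t_N\}$ once we have also pushed each constant value away from $0$, which is the content of conclusion (2).

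Next I would produce $q$ as a finite composition of small ``nudges,'' each supplied by Theorem \ref{isoflowprop}(4). For each $j$ in turn, choose a real number $\delta_j$ with $|\delta_j|<\epsilon$ so that $t_j+\delta_j\neq 0$ and, more importantly, so that after the nudge the image of $C'_j$ is disjoint from $C$ and from the (already-moved) images of $C'_1,\dots,C'_{j-1}$; a generic small $\delta_j$ works because $C$ and each moved piece are each contained in finitely many thin flowboxes, so the ``bad'' values of $\delta_j$ form a finite set. Let $\beta_j\colon Y\to\R$ be a continuous function equal to $\delta_j$ on a small flowbox neighborhood of the current image of $C'_j$ and equal to $0$ outside a slightly larger flowbox — such a $\beta_j$ exists by Urysohn, with $\|\beta_j\|_\infty\le|\delta_j|$ — and set $q_j(y)=\gamma(y,\beta_j(y))$. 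By Theorem \ref{isoflowprop}(1)$\Rightarrow$(2) and (4), each $q_j\in\hpiso(Y)$, and $q_j$ is within $|\delta_j|$ of the identity in the flow direction. Put $q=q_N\circ\cdots\circ q_1$. Since $\hpiso(Y)$ is a group, $q\in\hpiso(Y)$; by choosing the $|\delta_j|$ small its distance from the identity is as small as desired, giving the final sentence of the lemma. After all $N$ nudges, $q(C')$ is the disjoint union of the moved pieces, each sitting at a constant height $t_j+\delta_j\neq 0$ in a $C$-flowbox, so $q(C')\subset\{\gamma(x,t):x\in C,\ t\in T'\}$ with $T'=\{t_1+\delta_1,\dots,t_N+\delta_N\}$ finite, and $q(C')\cap C=\emptyset$ by the disjointness arrangement. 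This yields (1) and (2).

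I expect the main obstacle to be bookkeeping rather than any deep point: one must be careful that moving $C'_j$ does not re-collide with $C$ or with the earlier pieces, and that the flowbox neighborhoods used to define $\beta_j$ are small enough not to disturb the already-placed pieces. The clean way to handle this is to work throughout inside the fixed fine flowbox decomposition of $C$ at scale $\epsilon$, observe that each relevant set ($C$ itself, and each moved $C'_i$) lies in finitely many of these flowboxes at a constant height, and note that two such ``horizontal slices'' are disjoint iff their heights differ — so only finitely many values of the shift parameter are forbidden at each stage, and a small admissible $\delta_j$ always exists. Conclusion (1) is essentially automatic from this setup; it is included, as the remark before the lemma notes, only because it is convenient downstream, not because it costs extra work.
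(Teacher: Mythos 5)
There is a genuine gap, and it occurs right at the heart of the reduction: you assert that the ``height above $C$'' function $t_j\colon C'_j\to(-\epsilon,\epsilon)$ is ``continuous (hence, $C'_j$ being zero-dimensional, locally constant),'' and then that after further clopen subdivision you may assume each $t_j$ is constant. This inference is false. A continuous real-valued function on a zero-dimensional compact metric space need not be locally constant; the inclusion of the Cantor set into $\R$, or any continuous surjection of the Cantor set onto $[0,1]$, is a counterexample. In the present setting this is not an artificial worry: take $X$ a Cantor set with a homeomorphism $T$, $Y=\maptorus{X}$, $C=X\times\{0\}$, choose a continuous non--locally-constant $f\colon X\to(1/10,9/10)$, and set $C'=\{[(x,f(x))]:x\in X\}$. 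This is a legitimate cross section, and the height of a point of $C'$ over $C$ is exactly $f$, which has uncountable image. No finite clopen subdivision of $C'$ can make the height constant on each piece, so the later steps of your argument --- pushing each $C'_j$ by a single real number $\delta_j$ and getting a finite set $T'=\{t_1+\delta_1,\dots,t_N+\delta_N\}$ --- never get off the ground.

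The fix is precisely the point the paper is careful about: the ``nudge'' on the piece $U'_i$ must shift different orbits by different amounts, namely by (roughly) the amount $s(v)$ needed to bring $v\in V'_i$ onto the fixed slice $\gamma(C\times\{t_i\})$, while being the identity near the boundary of the flowbox so the pieces can be glued. The paper implements this with a continuously varying family of piecewise-linear reparametrizations $\ell_{s(v)}$ of $[-\epsilon,\epsilon]$, yielding $q$ with $q(y)=\gamma(y,\beta(y))$ for a continuous $\beta$ supported in $\cup_iU'_i$, and then invokes Theorem~\ref{isoflowprop}. Your overall architecture --- cover $C'$ by disjoint flowbox pieces, perturb within each, compose, and bound the size of the perturbation by $\epsilon$ --- matches the paper's; the missing idea is that the perturbation on each piece must itself be a function of the point rather than a constant. (Your device of obtaining~(2) by a generic choice of $\delta_j$ is also redundant once~(1) is in place: as the paper notes, one can simply postcompose with $\gamma_\nu$ for small $\nu$.)
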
 
\begin{proof}  
(2) follows from (1): 
given $q(C')$ satisfying (1) and any sufficiently small $\nu >0$, 
$(\gamma_{\nu} \circ q)(C')$ is a cross section disjoint from $C$.  
So it remains to prove (1). 

Let $\mu$ and $\mu'$ be the minimum return times to $C$ and $C'$ 
respectively under the flow. 
Suppose $0< \epsilon < \min {\{ \mu /2, \mu' /2\}}$.   

Given  $y\in C' $,  pick $v $ in   $C$ and $t \in
 \R$ such that   $y = \gamma (v, t)$.  
Because $0< \epsilon < {\mu/2}$, 
$\gamma$ maps   $ C \times {[t-\epsilon , t+ \epsilon ]}$ homeomorphically 
to a neighborhood 
of $y$.  
Pick $V' $ clopen in $C'$ and $\delta $ in $(0, \epsilon )$ 
such that 
$\gamma $ maps $V'\times (-\delta ,\delta )$ 
homeomorphically to an open neighborhood of $y$ contained in 
$\gamma (C \times {(t-\epsilon, t+\epsilon)})$.  

By compactness, we may cover $C'$ with finitely many such
neighborhoods 
$U_i'{:=}\gamma (V'_i \times {(\delta,\delta)})$, $1\leq i \leq
N$.  Suppose $i\neq j$ and $z\in U'_i \cap U'_j $.
Then there are $x_i \in V'_i, x_j \in V'_j$ and 
$\{ s_i, s_j \} \subset {(-\delta,\delta)}$ 
such that 
$z=\gamma (x_i,s_i)=\gamma (x_j,s_j)$. Then 
$\gamma (x_i , s_j -s_i ) =x_j$ with $|s_j - s_i| < {2\delta<2\epsilon} < \mu'$,  
so $s_i= s_j$ and $ x_i=x_j$. Therefore 
$U'_i \cap U'_j \subset  \gamma ((V'_i\cap V'_j)  \times {(-\delta,
\delta)})$.  
Replace each $V'_i$  with the clopen set 
$V'_i \setminus \cup_{j> i} V'_j$.  The open sets $U'_i$ are now 
pairwise disjoint but their union still covers $C'$. 

{Choose, for each $i$, a $t_i\in\R$ such that $U'_i\subset \gamma(C\times (t_i-\epsilon,t_i+\epsilon))$. Then, for each $y\in V'_i$ there is a unique $s(y)\in (-\epsilon,\epsilon)$ such that $y\in \gamma(C\times\{t_i-s(y)\})$. The function $y\mapsto s(y)$ is then continuous on $V'_i$.}
Given $s$ in ${(-\epsilon , \epsilon)}$, 
let ${\ell_{s}}$ be the  homeomorphism 
${[-\epsilon , \epsilon ] \to [-\epsilon ,\epsilon ]}$ 
which is the union of the increasing linear homeomorphisms 
${[-\epsilon , 0] \to [-\epsilon , s]}$ and 
${[0, \epsilon ]\to [s, \epsilon]}$.
With $y= \gamma (v,t) $, define 
$q: Y\to Y$ by 
{\begin{equation*}
	q(y)=
	\begin{cases}
		\gamma (v,\ell_{s(y)} (t))&
		\text{if }y=\gamma (v,t)\text{ for }(v,t)\in V'_i \times (-\epsilon , \epsilon ),\\
		& \hspace{14em} 1\leq i \leq N,\\
		y& \text{otherwise}.
	\end{cases}
\end{equation*}}

For each $i$, $q$ maps $C' \cap {U'_i}$ into $\gamma (C \times \{t_i\})$, 
so (1) holds with $T=\{ t_1, \dots , t_N \}$. 
The map $q$ is a homeomorphism mapping flow orbits to themselves 
preserving orientation. 
On each $U'_i$, with $y= \gamma (v,t)$ as above,  
$q ( y) = \gamma (y, \beta (y))$,
with $\beta (y) = {\ell_{s(y)}} (t)  -t $  continuous on $U'_i$. 
Define $\beta (y)=0$ outside $\cup_i U'_i$.
Then $\beta$ is continuous on $Y$ and 
$q ( y) = \gamma (y, \beta (y))$. 
It then follows from  Theorem \ref{isoflowprop}
that $q\in \hpiso (Y)$.

Because $|\beta(y)|< {2\epsilon} $ and 
$\epsilon >0$ was arbitrarily small, $q$ can be chosen arbitrarily 
close to the identity.
\end{proof}

\begin{theorem}[\cite{parrysullivan}] \label{pstheorem} Suppose 
$Y,Y'$ are {one-dimensional} compact metric spaces 
with fixed point free flows $\gamma, \gamma'$ 
for which $C,C'$ 
are zero-dimensional cross sections. 
Suppose 
$h: Y \to Y'$ is a flow map.  

Then there are 
discrete cross sections $D,D'$ for $\rho_C, \rho_{C'}$,  
with 
 $D\subset C$ and $D' \subset C'$ and $h^{-1}(D')=D$,  such that 
$h$ is a composition $h=h_2\circ h_1$,  
where 
\begin{enumerate} 
\item 
$h_1:Y \to Y$ lies in 
the group 
$\hpiso (Y)$,
  
\item 
$h_2: Y\to Y'$ is induced by a morphism $(D,\rho_D ) \to
(D',\rho_{D'})$. 
\end{enumerate} 
\end{theorem}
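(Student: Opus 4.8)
The plan is to reduce the map $h$ to one that is genuinely ``discrete'' by first moving $C'$ off of $h(C)$-related data using Lemma~\ref{disjointify}, and then discretizing return times. First I would apply Lemma~\ref{disjointify} to the two cross sections $C$ and $h^{-1}(C')$ of $Y$ (note $h^{-1}(C')$ is a cross section of $Y$ by Proposition~\ref{equivcrossconditions}, using only the nonsurjective-case conclusion if $h$ is not onto). This produces $q\in\hpiso(Y)$, arbitrarily close to the identity, with $q(h^{-1}(C'))$ disjoint from $C$ and contained in finitely many flow-translates $\gamma(C\times T)$ of $C$. Setting $\widetilde h = h\circ q^{-1}$, I have $h = \widetilde h\circ q$, and $\widetilde h$ has the feature that $\widetilde h^{-1}(C')$ is disjoint from $C$ and sits at finitely many ``heights'' above $C$. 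Absorbing $q$ into $h_1$ at the end, it suffices to produce the factorization for $\widetilde h$ with the extra geometric control; so I relabel and just assume $h^{-1}(C')\cap C=\emptyset$ and $h^{-1}(C')\subset\gamma(C\times T)$ for a finite $T\subset\R$.

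Next I would carry out the core Parry--Sullivan step. Since $C$ is zero-dimensional and $h^{-1}(C')$ lies at finitely many heights above $C$, the set $E:=h^{-1}(C')$ decomposes $C$ into finitely many clopen pieces according to how many times, and at which heights, the forward orbit segment from a point of $C$ to its $\rho_C$-return hits $E$. Using this clopen partition I can build a continuous function $\beta\colon C\to\R$ and reparametrize the flow near $C$ so that, after a further isotopy $h_1'\in\hpiso(Y)$ supported in a flowbox neighborhood of $C$, the images of the relevant points of $E$ get pushed \emph{into} $C$ itself — i.e.\ I arrange a new map, still called $h$ up to composing with elements of $\hpiso(Y)$, for which there is a genuine discrete cross section $D\subset C$ with $h^{-1}(C')\cap(\text{orbit segments})$ controlled, and $D'=h(D)\subset C'$ a discrete cross section for $\rho_{C'}$, with $h^{-1}(D')=D$. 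The point is that $D$ is obtained from $C$ by deleting the clopen pieces of $C$ whose first forward return to $C$ is ``interrupted'' by a point mapping into $C'$; by the remark following Definition~\ref{discretedefn}, such a $D$ is automatically a discrete cross section for $(C,\rho_C)$ provided every orbit still meets $D$, which holds because the relevant return times are bounded.

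Once $D,D'$ are in place with $h|_D$ intertwining $\rho_D$ and $\rho_{D'}$, the restriction $\phi:=h|_D\colon(D,\rho_D)\to(D',\rho_{D'})$ is a morphism of return maps, and $h$ is \emph{a} flow map induced by $\phi$ in the sense of the discussion following Definition~\ref{discretedefn} — but possibly not the canonical one. By that same discussion, any two flow maps induced by the same morphism differ by post-composition with an element $h_3\in\hpiso(Y)$ that restricts to the identity on $D$; so writing $h_2$ for the canonical induced flow map and $h = h_2\circ h_3$ (and then absorbing $h_3$, $h_1'$ and the original $q$ into a single $h_1\in\hpiso(Y)$, using that $\hpiso(Y)$ is a group) gives the desired $h = h_2\circ h_1$.

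The main obstacle I expect is the second paragraph: making precise the reparametrization near $C$ that converts ``orbit segment from $c\in C$ to $\rho_C(c)$ passes through $h^{-1}(C')$'' into ``delete a clopen subset of $C$'', while simultaneously checking that the resulting $D$ and $D'=h(D)$ are cross sections (bounded return times, surjectivity of the flow restricted to $D$), that $h^{-1}(D')=D$ exactly (no extra preimages, which is where the nonsurjective case needs care), and that the reparametrization is realized by an $\hpiso(Y)$ element via Theorem~\ref{isoflowprop}(3) — i.e.\ that the compatibility condition (b) there, $\beta(\rho_C(y))-\beta(y)=\tau_D(h(y))-\tau_C(y)$, can be arranged by the choice of $\beta$ on the clopen pieces. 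Everything else (the disjointification, the group-theoretic bookkeeping of $\hpiso(Y)$, the final appeal to ``two induced maps differ by an isotopy'') is routine given the results already established.
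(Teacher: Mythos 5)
Your overall strategy---disjointify via Lemma~\ref{disjointify}, carry out a core step producing $D,D'$ and a conjugacy of return maps, then absorb the isotopies into $h_1$---is essentially the paper's; the paper simply treats the disjoint case directly as Case~1 and invokes Lemma~\ref{disjointify} only in Case~2. But your description of the core step contains a concrete error and omits the key idea, and together these are a genuine gap.

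The error: you take $D$ to be $C$ with the ``interrupted'' clopen pieces deleted, i.e.\ $D=\{x\in C:\tau_2(x)>\tau_1(x)\}$, where $\tau_1,\tau_2$ are the first forward hitting times to $C$ and to $E:=h^{-1}(C')$ after disjointifying. This $D$ can be empty: on a circle of circumference one with the rotation flow, $C$ a single point and $E$ a single different point, every segment from $C$ to $\rho_C(C)$ meets $E$, so your $D=\emptyset$, and ``bounded return times'' does not save it. The correct set is the opposite one, $D=\{x\in C:\tau_2(x)<\tau_1(x)\}$, i.e.\ the last $C$-hits before each $E$-hit, which is clopen in $C$ and met by every orbit because every orbit meets $E$. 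The missing idea: the real content of the Parry--Sullivan step is the first-hit conjugacy, which your ``push $E$ into $C$ by reparametrizing'' never produces. One sets $D''=\{\gamma(x,\tau_2(x)):x\in D\}\subset E$ and proves the alternation fact that for $y\in D$ the first four points of $(D\cup D'')\cap\gamma(\{y\}\times[0,\infty))$ occur in the order $y$, $\psi(y)$, $\rho_D(y)$, $\psi(\rho_D(y))$, where $\psi\colon y\mapsto\gamma(y,\tau_2(y))$. This makes $\psi$ a topological conjugacy $(D,\rho_D)\to(D'',\rho_{D''})$; $h_1$ is then a flow equivalence induced by $\psi^{-1}$, isotopically trivial by Theorem~\ref{isoflowprop} because $\tau_2$ is continuous on $D$, and $h_2=h\circ h_1^{-1}$ restricts to the required morphism $(D,\rho_D)\to(D',\rho_{D'})$ with $D'=h(D'')$. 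Your pushdown, without $D''$ and the alternation, need not be well defined or injective, and you have no argument that it conjugates return maps; that is exactly where the theorem's work is. (A minor point: you lean on the finitely-many-heights clause of Lemma~\ref{disjointify}, but the paper notes that clause is not needed for this theorem---only disjointness is used.)
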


\begin{proof} By Proposition \ref{equivcrossconditions}, 
$h^{-1}(C')$ is a cross section for the flow on $Y$. 

{\it Case 1: 
{$C \cap h^{-1}(C') = \emptyset$}.} 
Because $C$ and {$h^{-1}(C')$} are disjoint cross sections, there are
continuous hitting time functions {$C \cup h^{-1}(C') \to (0, \infty )$} 
defined by  
\begin{align*} 
\tau_1 (y) &= \min \{ t\in \R : t>0, \gamma (y,t) \in C \}, \\
\tau_2 (y) &= \min \{ t\in \R : t>0, \gamma (y,t) \in  h^{-1}(C')\}.
\end{align*} 
Define cross sections $D, D'' , D'$ for $Y,Y,Y'$ (respectively) 
by 
\begin{alignat*} {2}
\{ x\in C: \tau_2 (x) < \tau_1 (x)\}  &= D \ && \subset C, \\ 
\{ \gamma (x,\tau_2(x)):  x\in {D} \}& =D''\ && \subset h^{-1} (C'), \\ 
h(D'') &=D'\ && \subset C'. 
\end{alignat*} 
Let $\rho, \rho'', \rho'$ be the return maps for the cross sections 
$D,D'' ,D'$ under the flows $\gamma , \gamma , \gamma'$ (respectively).  
Let $\psi : D \to D''$ be the homeomorphism 
$y \mapsto y''$ defined by 
$y'' = \gamma (y,\tau_2(y) )$. Given $y$ in $D$, the first four elements
(in order along the flow) 
of $ (D\cup D'')\, \cap \,\gamma (\{y\} \times [0,\infty )) $ 
 are $y, y'', \rho(y), \rho''(y'')$. 
Thus $\rho''(\psi (y)) = \psi (\rho (y))$
\[\xymatrix{
 \ar@{.}[r]
& 
y \ar@{.}[r]
\ar@/_2pc/[rr]_{\rho} 
\ar@/^2pc/[r]
^{\psi}  
& y'' \ar@{.}[r]
 \ar@/^2pc/[rr]
^{\rho''} 
& \rho (y)  \ar@{.}[r]
 \ar@/_2pc/[r]
_{\psi}  
&
 \rho''(y'') \ar@{.}[r]& 
}
\]
 and 
$\psi$ is a topological conjugacy 
 $(D,\rho ) \to (D'', \rho'')$.   

Let $h_1: Y \to Y$ be a flow equivalence induced 
by ${\psi^{-1}}$. 
Because $\tau_2$ is continuous on $D$, 
it follows from Theorem \ref{isoflowprop} that 
$h_1 \in \hpiso (Y)$.
Define $h_2: Y\to Y'$ by $h_2=h\circ {h_1^{-1}}$.
The map $h_2 $ restricts to a morphism $ (D,\rho) \to
(D',\rho')$    
\[ \xymatrix{
 \ar@{.}[r]
& y \ar[rd]^{h_2} \ar[r]^{h_1}  \ar@{.}[d]_h
& y'' \ar@{.}[r] \ar[d]^h
& \rho (y)  \ar[rd]^{h_2}   \ar[r]^{h_1}      \ar@{.}[d]_h       
& \rho''(y'') \ar[d]^h  \ar@{.}[r]&
\\ 
 \ar@{.}[r]  
&\ar@{.}[r]& y' \ar@{.}[r]&\ar@{.}[r]&\rho'(y') \ar@{.}[r]&
} 
\]
and the flow map $h_2$ is induced by this morphism.  

{\it Case 2: 
{$C \cap h^{-1}(C') \neq \emptyset$}.} 
By Lemma \ref{disjointify} {there is a}
$q\in \hpiso (Y)$ such that $q(C)\cap {h^{-1}(C')} = \emptyset$. 
The Case 1 argument then gives cross 
sections $D\subset q(C) $ and $D'\subset {h^{-1}(C')}$ such that 
{$h=h_2\circ h_1$} where {$h_1\in \hpiso (Y)$} and $h_2$ is induced by a morphism of return maps 
for $D,D'$. Then {$q^{-1}(D)$} is a cross section; 
{$q^{-1}(D) \subset C$}; 
{$h_2\circ q$} is  an equivalence induced by  
a {morphism} of the return maps for {$q^{-1}(D)$} and $D'$; 
{$q^{-1}\circ  h_1\in \hpiso (Y)$}; and 
{$h = (h_2\circ q)\circ (q^{-1}\circ  h_1)$}. 
\end{proof}


From the flow equivalence case 
of condition (2) in Theorem \ref{pstheorem},
Parry and Sullivan \cite{parrysullivan}
and Bowen and Franks \cite{BowenFranks}
derived invariants of flow equivalence for shifts of finite
type which Franks 
\cite{jf:fesft} 
showed to be complete 
invariants for flow equivalence of nontrivial irreducible SFTs.  
For 
the Huang classification of general SFTs up to flow equivalence, 
see 
\cite{mb:fesftpf, mbdh:pbeim}. 
For the classification up to flow equivalence for 
general SFTs  with a free finite group action, 
see \cite{bce:gfe}. For partial results on the 
classification of sofic shifts up to flow 
equivalence, see \cite{bce:sofic}.

\section{Flow codes} \label{sec:flowcodes}
If $\phi :X\to Y$ is a continuous shift commuting map between subshifts, 
then $\phi$ is defined by a block code: a rule $\Phi$ defined on
$X$-words 
of length $i+j+1$ such that 
{for} all $n$ and all  $x$ in $X$, $(\phi (x))_n = \Phi ( x_{n-i}x_{n-i+1}
\cdots x_{n+j})$. We will introduce {\it flow codes} (and {\it word 
  flow codes}) to get   analogous 
invariant local codings for  flow maps.


Let $C$ be a discrete cross section for a subshift $(X,\sigma)$. 
Given $C$, the {\it return time bisequence} of a point $x$ in $C$ 
is the bisequence $(r_n)_{n\in \Z}$ 
(with $r_n=r_n(x)$) of integers such that 
\begin{enumerate}
\item 
 {$\sigma^j (x)\in C$} if and only if 
$j=r_n$ for some $n$, 
\item 
 $r_n< r_{n+1}$ for all $n$, and 
\item 
$r_0= 0$. 
\end{enumerate} 
A {\it return word} is a word equal to $x[0,r_1(x))$ for some $x\in
C$, with $x[a,b)$ denoting the finite segment $x_ax_{a+1}\cdots x_{b-1}$ of $x$. 
Given $x\in C$ and $n\in \Z$, $W_n=W_n(x)$ denotes the return 
word $x[r_n,r_{n+1})$. 
In the context of a given $C$, when we write 
$x=\dots W_{-1}W_0W_1 \dots $ below, we mean $x\in C$ and $W_n = W_n(x)$. 
Given {$x\in C$} and $i\leq j$, the tuple 
$(W_n(x))_{n=i}^j$ is the $[i,j]$ return block of $x$. 
To know this return block is to know
the word $W= W_i\cdots W_j$ together with 
its factorization as a concatenation of  return words. 

\begin{definition}\label{defnwordblockcode}
For a discrete cross section $C$ of a subshift $X$, 
a $C$  {\it word block code} 
is a function $\Phi$, which 
for some positive integer $M$ 
maps 
$[-M,M]$ return blocks 
{occurring} in $X$ 
to words. 
A  {\it word block code} 
is a $C$  {\it word block code} for some $C$. 
The  function $\phi$ from 
$C$ into a subshift 
 given by  $\Phi$ 
is defined to map  
$x = (W_n)_{n\in \Z} $   
to the concatenation 
$x' =   (W'_n)_{n\in \Z} $, with   $W'_n=  \Phi (W_{n-M},...,W_{n+M}) $  
and $x'[0,\infty )=W'_0W'_1 \dots $. {By an abuse of terminology, we will also call the function $\phi$ a word block code.}
\end{definition} 

Let $C$ and $\phi$ be as in Definition \ref{defnwordblockcode}. 
Because $C$ is clopen, there is a $\kappa \geq 0$ such that for all
$x$ in $X$, the word $x[-\kappa, \kappa]$ determines whether 
$x$ is in $C$. If $R$ is the
maximum return time to $C$, it follows for all $x$ in $C$ that  
$x[ -\kappa -MR,  MR+\kappa  ]$ determines the return block 
$(W_{-M}(x), \dots , W_{M}(x) )$.
In particular,  $\phi$  is continuous on $C$.
{Notice that} $\phi$ is not required to be injective or surjective. 


\begin{definition} \label{flowcodedef}
Let $(X,\sigma), (X',\sigma')$ be  subshifts and let  
$C$ be a discrete cross section of $\sigma$.    
A {\it $C$ word flow code} is a flow map 
$h: \maptorus X \to \maptorus X'$ 
defined 
from a $C$ word block code $\phi$ as follows: 
for each 
$ x=(W_n)_{n\in \Z} $ and 
$\phi (x)=x'=(W'_n)_{n\in \Z}$,  
 \[
h: [(x,t)] \mapsto [(x',ct)] , \quad 0\leq t \leq |W_0|,
\]
with $c= |W'_0|/|W_0|$. 
A {\it word flow code} is a 
 $C$ word flow code for some  
discrete cross section $C$. 
\end{definition} 
Let $y=[x,0]$ be a point in $\maptorus X$ with $x$ in $C$. 
We can visualize the word flow code map on the orbit of $y$ 
by adding vertical bars  to display the factorization into return words:
\begin{alignat*}{9} 
y= \cdots &\ |\ x_{-3}x_{-2}x_{-1}&&\ |\  x_0x_1   &&\ |\ x_2
&&\ |\ x_3x_4x_5&&\ |\  \cdots    \\
            &                            &&\ \ \ \ \downarrow    \\ 
h(y)=\cdots &\ |\   x'_{-2}x'_{-1} &&\ |\  x'_0x'_1x'_2 &&\ |\ x'_3x'_4 
&&\ |\ x'_5 &&\ |\  \cdots  
\end{alignat*}
The coordinates of $x'$ covered by $W'_n$ may grow arbitrarily far
from 
the coordinates $[r_n, r_{n+1})$ of $x$ covered by
$W_n$. Nevertheless, $h: \maptorus X\to \maptorus X'$ is defined by 
patching together local rules. 

Given  $\phi$  a $C$ word block code, let $X_{\phi (C)}$ be the 
subshift which is the shift closure of $\{ \phi (x) : x\in C\}$. 
Then $\phi$ defines a {word} flow code $h:\maptorus X \to 
\maptorus X_{\phi  (C)}$. However, $\phi (C)$ need not be a discrete cross section 
for $ X_{\phi  (C)}$, even if $\phi$ is 
a block code, because in general $\phi (C)$ need not be open 
as a subset of 
$X_{\phi (C)}$ (Example \ref{phiCnotopen}).  
Even if $\phi (C)$ is a discrete cross section, we would like to insist 
that $C= h^{-1} ({h(C)})$ so that 
the flow code defined from $\phi$ will be induced by a 
morphism of return maps, 
$(C,\rho_C)\to {(\phi (C), \rho_{\phi (C)})}$.
So, we will refine the definition of word flow code. 

To be completely explicit, suppose $C,C'$  are discrete 
cross sections for   
subshifts $(X,\sigma), (X',\sigma')$. For $W$ an $X$-word or $X'$
word, we use notation $[W]$ to denote the word 
 viewed as a symbol in an alphabet. For $x$ in $C$, 
let  $\eta_C$ map $x=\cdots W_{-1}W_0W_1 \cdots $ to the 
bisequence $\cdots [W_{-1}][W_0][W_1] \cdots$. Then $\eta (C)$ 
is compact and invariant under the shift map $\sigma$, and 
$\eta_C : (C, \rho_C)\to (\eta_C (C), \sigma )$ is a topological  
conjugacy. Now suppose $\psi : (C, \rho_C) \to (C', \rho_{C'})$ 
is a morphism of discrete systems. Then the map  
${\eta_{C'}\circ \psi\circ\eta_C^{-1}}
: \eta_C(C)\to \eta_{C'}(C')$ 
is a block code, 
which is used to define a $C$ word code $\Phi$ and from 
that a $C$ word flow code $h: \maptorus X\to \maptorus X'$. 
This $h$ is the word flow code induced by the morphism 
$\psi : (C, \rho_C) \to (C', \rho_{C'}) $. 

\begin{definition} \label{flowcodedefcross}
Let $(X,\sigma), (X',\sigma')$ be  subshifts with discrete cross sections 
$C,C'$.  A {\it $(C,C')$  flow code} is a 
flow map $h: {\maptorus X} \to {\maptorus X'}$ 
such that the following hold: 
\begin{enumerate} 
\item 
$h$ is a word flow code defined by a morphism 
$(C,\rho_C)\to ( C', \rho_{ C'})$. 
\item 
$h^{-1} (C') = C$ . 
\end{enumerate} 
A {\it flow code} is a 
 $(C,C')$  flow code for some $(C,C')$ as above.  
\end{definition}

\begin{example}\label{example:symbolexpansion} 
Suppose $a$ is a symbol from the alphabet $\mathcal A (X)$ of a subshift 
$(X,\sigma)$ and $a'$ is not in $\mathcal A (X)$. Define a word code 
on $X$ which copies each symbol except $a$, and maps $a$ 
to $aa'$. This is a  {\it symbol expansion}. Here $\phi (X)$ 
is a discrete cross section in $X_{\phi (X)}$, and 
the resulting $(X,\phi (X))$ flow code is a flow equivalence. 
For example, 
for $X$ the 2-shift on alphabet $\{a,b\}$  and $X'$ the golden mean
shift  on alphabet $\{ a,a',b\}$ (with $(b^*(aa')^*)^*$ 
its language), 
  this symbol expansion gives a flow equivalence 
$\maptorus X \to \maptorus X'$.  
\end{example} 

%
In Theorem \ref{isotoflowcode} below, for brevity we identify the 
base cross section $\{[(x,0)]: x\in X\} $ of ${\maptorus X}$ with $X$, and 
the return map with the shift map on $X$. 

\begin{theorem} \label{isotoflowcode}
Suppose $h: {\maptorus X}\to {\maptorus X'}$ is a flow equivalence of 
mapping tori of subshifts. Then there is a 
flow code $h_2 : {\maptorus X} \to {\maptorus X'}$  and an ${h_1\in\hpiso(\maptorus X)}$ such that $h= h_2{\circ} h_1$.  The map $h_1$ has the form 
$h_1\colon y\mapsto \gamma (y, \beta (y))$, with $\beta : Y \to \R$
continuous. 
\end{theorem}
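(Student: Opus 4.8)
The plan is to combine the Parry--Sullivan decomposition of Theorem \ref{pstheorem} with a clopen-refinement argument so that the morphism of return maps becomes a \emph{word} block code and the cross section it lands on becomes a genuine discrete cross section satisfying $h^{-1}(C')=C$. First I would apply Theorem \ref{pstheorem} to the flow equivalence $h\colon\maptorus X\to\maptorus X'$, using the base cross sections $X$ and $X'$: this produces discrete cross sections $D\subset X$, $D'\subset X'$ with $h^{-1}(D')=D$, a factorization $h=h_2\circ h_1$ with $h_1\in\hpiso(\maptorus X)$, and $h_2$ induced by a conjugacy (it is invertible here) $\psi\colon(D,\rho_D)\to(D',\rho_{D'})$. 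By part (4) of Theorem \ref{isoflowprop}, $h_1$ has the claimed form $y\mapsto\gamma(y,\beta(y))$ with $\beta$ continuous, which disposes of the last sentence. So the whole burden is to show that $h_2$ is a flow code in the sense of Definition \ref{flowcodedefcross}, i.e. that $\psi$ can be realized by a $C$ word block code for a suitable discrete cross section $C$ refining $D$, landing in a discrete cross section $C'$ of $X'$, with $h_2^{-1}(C')=C$.

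Next I would push $D$ and $D'$ down to the base via the discussion after Definition \ref{discretedefn}: $D$ being a discrete cross section for $(X,\sigma)$, the space $X$ is a finite disjoint union of clopen cylinders $\sigma^i(D_j)$, $0\le i<j$, where $D_j=\{x\in D:r_D(x)=j\}$; similarly for $D'$. The issue is that $h_2$ restricted to the base $X$ need not be a return-word--to--return-word substitution: the boundaries of return words under $\psi$ need not align across $D$ versus $D'$. The standard fix is to pass to a common \emph{finer} clopen partition. Concretely, since $h_2$ is continuous and maps $D$-orbit-points to $D'$-orbit-points in an orientation-preserving way, each point of $X$ lies in a flowbox over $D$ \emph{and} its image lies in a flowbox over $D'$; pulling these back and intersecting, and using clopenness, I can find a clopen refinement and thereby a discrete cross section $C\subset D\subset X$ with a discrete cross section $C'\subset D'\subset X'$ such that $h_2$ is induced by a morphism $(C,\rho_C)\to(C',\rho_{C'})$, $h_2^{-1}(C')=C$, and — crucially — the morphism takes $C$ return words to $C'$ return words with only finitely much ``memory''. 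That finiteness, which is exactly the hypothesis of Definition \ref{defnwordblockcode}, follows because $C$ is clopen (so some $x[-\kappa,\kappa]$ decides membership in $C$) and the return time to $C$ is bounded. With this $C$, the map $\eta_{C'}\circ\psi\circ\eta_C^{-1}$ on symbol sequences is an honest block code, hence defines a $C$ word block code $\Phi$ and thus a $C$ word flow code $h_2$, which by construction is the word flow code induced by the morphism; together with $h_2^{-1}(C')=C$ this is precisely a $(C,C')$ flow code.

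I would then note that replacing $D$ by the smaller $C$ does not damage the factorization: if $h_2'$ denotes the word flow code induced by the restricted morphism, one checks $h_2'$ and $h_2$ agree (both are flow maps induced by the same morphism on $C$, and by the remark closing Section \ref{sec:flows} any two such differ by an element of $\hpiso(\maptorus X)$ fixing $C$ pointwise; absorb that element into $h_1$). Thus $h=h_2'\circ h_1'$ with $h_1'\in\hpiso(\maptorus X)$ of the stated form and $h_2'$ a flow code.

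The main obstacle, I expect, is the refinement step: verifying carefully that a clopen refinement $C$ of $D$ can be chosen \emph{simultaneously} making $C'=h_2(C)$-ish a discrete cross section, keeping $h_2^{-1}(C')=C$, and keeping the induced map a finite-memory word block code rather than merely a continuous morphism of zero-dimensional systems. The point that needs care is that shrinking $C$ inside $D$ automatically keeps $h_2^{-1}(C')=C$ only if one defines $C'$ as $h_2(C)$ and simultaneously checks $h_2^{-1}(h_2(C))=C$, which uses injectivity of $h_2$ on the relevant flowboxes; that in turn is where the orientation-preserving local-homeomorphism property of the flow map (Definition \ref{fenotation}) and the flowbox theorem (Theorem \ref{localcross}) do the work. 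The rest is bookkeeping with cylinder sets and return-time bounds.
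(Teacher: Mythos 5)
Your outer framing matches the paper: apply Theorem \ref{pstheorem} to get $h = h'_2\circ h'_1$ with $h'_1\in\hpiso(\maptorus X)$, use Theorem \ref{isoflowprop}(4) for the form of $h_1$, and at the end absorb into $h_1$ the element of $\hpiso(\maptorus X)$ by which $h'_2$ differs from the word flow code induced by the morphism. But the ``alignment'' issue driving your refinement step is not actually there, and your justification for the finite-memory property is backwards.

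Theorem \ref{pstheorem}(2) already says $h_2$ is \emph{induced by a morphism} $\psi\colon(D,\rho_D)\to(D',\rho_{D'})$. By the discussion at the end of Section \ref{sec:flows}, this means $\psi\colon D\to D'$ is continuous, intertwines $\rho_D$ and $\rho_{D'}$, and $h_2^{-1}(D')=D$. The intertwining \emph{is} the alignment of return-word boundaries: the $n$-th $D$-return word of $x\in D$ maps to the $n$-th $D'$-return word of $\psi(x)\in D'$. So no refinement of $D,D'$ is required, and the worry that ``the boundaries of return words under $\psi$ need not align'' contradicts what Theorem \ref{pstheorem} gives you. As for finite memory: your argument that it ``follows because $C$ is clopen \dots and the return time to $C$ is bounded'' shows only that the return block $(W_{-M}(x),\dots,W_M(x))$ is determined by a finite window $x[-\kappa-MR,MR+\kappa]$; that is why a given word block code $\Phi$ yields a \emph{continuous} $\phi$ on $C$, not why the given morphism $\psi$ comes from such a $\Phi$. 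The step you actually need appears only in passing at the end of your middle paragraph: present $(D,\rho_D)$ and $(D',\rho_{D'})$ as subshifts over return-word alphabets via $\eta_D,\eta_{D'}$ (as in the paragraph before Definition \ref{flowcodedefcross}) and invoke Curtis--Lyndon--Hedlund, so that $\eta_{D'}\circ\psi\circ\eta_D^{-1}$ is a block code, hence a $D$ word block code. That one observation, applied to the unrefined $D,D'$ from Theorem \ref{pstheorem}, replaces your entire middle section and is precisely what the paper does.
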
 

\begin{proof} 
As in  Theorem \ref{pstheorem}, $h= {h'_2 \circ h'_1}$, 
with ${h'_2}$ induced by a topological conjugacy of 
the return maps to discrete cross sections $C,C'$ in $X,X'$ and ${h'_1\in\hpiso(\maptorus X)}$. 
We identify the base cross sections $X,X'$ of the mapping tori 
with subshifts, with return maps given by the shift map. 
We may present these return maps as subshifts, 
with alphabets the respective return word sets. 
The topological conjugacy, as a block code with 
respect to these alphabets, is {a word block} code 
{which induces} a flow code $h_2$ {such that $h'_2=h_2\circ\phi$ for some $\phi\in \hpiso(\maptorus X)$. Let $h_1=\phi\circ h'_1$. Then $h_1\in\hpiso(\maptorus X)$ and $h= h_2\circ h_1$.} 
The form for $h_1$ follows from Theorem \ref{isoflowprop}. 
\end{proof}  

We could briefly summarize Theorem 
\ref{isotoflowcode} by saying that every 
flow equivalence  of subshifts is isotopic to 
one given by a flow code. 

Flow codes are {adapted} to 
formulating coding 
arguments for flow equivalence 
of subshifts (analogous to block codes for 
shift-commuting maps between subshifts).

\begin{remark}  
The fact that a flow equivalence 
of mapping tori of subshifts  (when it exists) is isotopic 
to a nice one (given by a flow code) is reminiscent of the 
fact 
that a continuous equivalence of smooth flows on compact hyperbolic 
sets has a $C^0$ perturbation to a H\" older continuous 
equivalence \cite[Theorem 19.1.5]{hk:book}. 
\end{remark} 

Finally we {detail} the example referred to earlier. 

\begin{example} \label{phiCnotopen} 
Let $(X,\sigma)$ be the golden mean shift 
on symbols $a_1, a_2,b$, with language $((a_1a_2)^*b^*)^*$, and 
let $\phi$ be the one block code with rule 
$a_1\mapsto a, a_2 \mapsto a, b\mapsto b$. Then 
$C=\{ x: x_0= a_1 \text{ or } x_0 =b\}$ is a discrete cross section 
of $(X,\sigma) $; $\phi (X)=X_{\phi  (C)}$; 
the fixed point $a^{\infty }$ is in $\phi (C) $; and 
 $\phi (C) $ contains no $\phi (X)$ neighborhood of 
$a^{\infty }$ (because for every positive integer $n$, $\phi (C)$ contains
no point $x$ such that $x[0,2n+1]= a^{2n+1}b$. 
 Therefore $\phi (C)$ is not open 
(alternately, note the return time to $\phi (C)$ is not continuous). 
Therefore $\phi (C)$ is not a discrete cross section for the subshift 
$X_{\phi  (C)}$. 
\end{example}

\section{Flow equivalence induced by conjugacy}
\label{floweqconjsec} 

We consider conditions on a flow equivalence of mapping tori 
which can force it to 
be induced by a topological conjugacy of  their bases.  

For completeness, we first recall an old, simple lemma (for which we do not know the original reference). Given an edge $e$ in a directed graph, we let $\iota (e)$ be its initial vertex and $\tau (e)$ its terminal vertex. By a cycle of edges, we mean a string of edges $e_0\dots e_k$ such that $\tau(e_i) =\iota (e_i)$ for each $i$, and $\tau(e_k) =\iota (e_0)$.

  \begin{lemma} \label{graphlemma}
  Suppose $f$ is a function from edges of an irreducible finite directed graph
  $\mathcal G$ into an abelian group $G$, such that for every cycle of edges
  $e_0\dots e_k$ in the graph, $\sum_{0\leq i \leq k}f(e_i)=0$.
  Then there is a function $h$ from vertices into $G$ such that for all $e$, 
  $f(e) = h(\tau (e)) - h(\iota (e))$.
  \end{lemma}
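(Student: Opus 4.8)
The statement is the classical fact that a graph ``cocycle'' with trivial periods is a ``coboundary'' on an irreducible graph. The plan is to define $h$ by fixing a base vertex and summing $f$ along a path, then check that irreducibility plus the cycle hypothesis makes this well-defined and forces the coboundary identity.

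First I would fix a base vertex $v_0$ of $\mathcal G$. Since $\mathcal G$ is irreducible, for every vertex $w$ there is a directed path $p_w$ from $v_0$ to $w$; define $h(w) = \sum_{e \in p_w} f(e)$, with $h(v_0)=0$ (empty path). The key step is to show this does not depend on the chosen path $p_w$: if $p$ and $q$ are two directed paths from $v_0$ to $w$, I want $\sum_{e\in p} f(e) = \sum_{e\in q} f(e)$. Here irreducibility is used again: pick a directed path $r$ from $w$ back to $v_0$; then $pr$ and $qr$ are both cycles of edges at $v_0$, so by hypothesis $\sum_{e\in p}f(e) + \sum_{e\in r}f(e) = 0 = \sum_{e\in q}f(e) + \sum_{e\in r}f(e)$, and since $G$ is abelian (a group) we may cancel $\sum_{e\in r}f(e)$ to get equality. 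Thus $h$ is well defined.

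Next I would verify the coboundary identity. Given an edge $e$ from $u=\iota(e)$ to $w=\tau(e)$, take a path $p_u$ from $v_0$ to $u$; then $p_u e$ is a path from $v_0$ to $w$, so by the well-definedness just proved, $h(w) = \sum_{e'\in p_u e} f(e') = h(u) + f(e)$, i.e. $f(e) = h(\tau(e)) - h(\iota(e))$, as required. (One should note $\sum$ over a cycle of edges is unambiguous since $G$ is abelian, and that the cyclic-order ambiguity in ``cycle of edges'' is harmless for the same reason; also a single edge $e$ with $\iota(e)=\tau(e)=v_0$ is itself a cycle, handled directly.)

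The main obstacle is essentially just the well-definedness step, and even that is routine once one remembers to use irreducibility a \emph{second} time to close up an arbitrary pair of paths into two cycles sharing a common return tail; without that observation one is tempted to try to connect $p$ and $q$ directly by cycles, which need not work since $p$ and $q$ may share no internal vertices. The abelian hypothesis is what lets us cancel the shared tail. No compactness, no finiteness beyond what guarantees paths exist (indeed irreducibility of a finite graph guarantees a path between any ordered pair of vertices), so the argument is purely combinatorial.
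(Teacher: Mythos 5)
Your proof is correct and takes essentially the same route as the paper: define $h$ by summing $f$ along a path from a fixed base vertex, then use irreducibility to pick a return path $r$ so that both candidate paths close up into cycles sharing the tail $r$, and cancel. The paper's proof states this well-definedness argument in one terse parenthetical and leaves the final identity $f(e)=h(\tau(e))-h(\iota(e))$ implicit; you have merely spelled out both steps.
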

  \begin{proof}
  Pick a vertex $v_0$ in the graph. Define $h(v_0)=0$. Given a vertex $v$,
  by irreducibility there exists a path  
  $e_0\dots e_k$  with $\iota (e_0)= v_0$ and $\tau (e_k)=v$. 
  Define $h(v)= \sum_{0\leq i\leq k} f(e_i)$.  By the
  zero-on-cycles assumption, $h(v)$ does not depend on the path
  (it must be the negative of the sum of $h$ along any path from $v_0$
  back to $v$).  
  \end{proof}

\begin{proposition} \label{sftlivsic} 
Suppose $f$ is a locally constant function 
from an irreducible SFT $(X,\sigma)$ into an abelian group, and $f$ 
sums  to zero over each 
periodic orbit. 
Then  there is a locally constant 
function $b $ such that 
$f= b \circ \sigma - b$. 
\end{proposition}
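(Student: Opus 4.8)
The plan is to reduce Proposition \ref{sftlivsic} to the combinatorial Lemma \ref{graphlemma} by passing to a higher block presentation of the SFT in which $f$ becomes a function of a single coordinate, i.e., a function on the edges of the defining graph. Since $(X,\sigma)$ is an irreducible SFT and $f$ is locally constant, there is a positive integer $m$ such that $f(x)$ depends only on the central block $x[-m,m]$. Recoding $X$ as a vertex shift (or rather an edge shift) on an irreducible graph $\mathcal G$ whose vertices are admissible $X$-blocks of length $2m$ and whose edges are admissible $X$-blocks of length $2m+1$, the function $f$ descends to a function $\hat f$ on the edge set of $\mathcal G$. Because $X$ is irreducible, $\mathcal G$ can be taken irreducible.

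First I would make the recoding precise and check that a bi-infinite path in $\mathcal G$ corresponds to a point of $X$ (via the conjugacy between $X$ and the edge shift $X_{\mathcal G}$), so that periodic orbits of $\sigma$ correspond exactly to cycles of edges in $\mathcal G$, up to cyclic permutation. Next I would invoke the hypothesis: $f$ sums to zero over every periodic orbit, which translates to $\sum_{0\le i\le k}\hat f(e_i)=0$ for every cycle of edges $e_0\dots e_k$ in $\mathcal G$. Then Lemma \ref{graphlemma} applies directly and yields a function $h$ on the vertices of $\mathcal G$ with $\hat f(e)=h(\tau(e))-h(\iota(e))$ for every edge $e$. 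Pulling $h$ back through the conjugacy, define $b$ on $X$ by $b(x)=h(\text{vertex of }\mathcal G\text{ at ``time } 0\text{''})=h(x[-m,m-1])$ (the length-$2m$ block read off at the appropriate coordinate window). This $b$ is locally constant since it depends only on finitely many coordinates, and the edge identity $\hat f(e)=h(\tau(e))-h(\iota(e))$ becomes exactly $f(x)=b(\sigma(x))-b(x)$ for all $x\in X$, because the edge of $\mathcal G$ traversed at time $0$ along the path coding $x$ has initial vertex $x[-m,m-1]$ and terminal vertex $x[-m+1,m]$, which is the vertex at time $0$ along the path coding $\sigma(x)$.

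The main obstacle is bookkeeping rather than conceptual: getting the index conventions in the higher block code to line up so that "sum over a periodic orbit" corresponds cleanly to "sum around a cycle of edges," and so that the coboundary relation $\hat f(e)=h(\tau(e))-h(\iota(e))$ pulls back to $f=b\circ\sigma-b$ with the right placement of the shift (as opposed to $f=b-b\circ\sigma$ or an off-by-one block window). One should also be slightly careful that the graph $\mathcal G$ one extracts really is irreducible — this follows because $X$ is irreducible, but it is worth noting that one may need to pass to the subgraph supported on vertices that actually occur in points of $X$, which is automatic in the standard higher block construction. Once these conventions are fixed, the proof is immediate from Lemma \ref{graphlemma}.
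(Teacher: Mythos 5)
Your proof is correct and takes essentially the same route as the paper's: both pass to a higher block (edge shift) presentation so that $f$ depends on a single edge, invoke Lemma \ref{graphlemma} to produce a vertex function $h$, and then set $b(x)=h(\iota(x_0))$ to obtain $f=b\circ\sigma-b$. The paper states this in three lines; you have simply made the recoding bookkeeping explicit, which is fine.
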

\begin{proof}
After passing to a higher block 
presentation,  without loss of generality one can assume
the SFT is an edge shift and 
 $f(x)$ depends only on the edge $x_0$,  $f(x)=f(x_0)$.
Let $h$ be as in Lemma \ref{graphlemma} and define $b(x)=h(\iota (x_0))$. Then
$f(x)=b(\sigma (x)) - b(x)$.
\end{proof}
For a generalization of Proposition \ref{sftlivsic}
to nonabelian groups (with a more subtle conclusion), 
see \cite[Theorem 9.3]{parrylivsic} and  
\cite{schmidtlivsic}; for a remark on that conclusion,
see \cite[Remark 4.7]{BoSc2}.
Proposition \ref{sftlivsic} is a special case of a subshift  
 version of the Liv\v sic Theorem \cite[Thm. 19.2.1]{hk:book}.

{The next result  is another subshift version of a theorem 
  for smooth hyperbolic flows
  (see \cite[Theorem 19.2.8]{hk:book}).} 
\begin{theorem} \label{isftfebyconj}
Suppose that there are flows on $Y, Y'$ having cross sections
$C, C'$ such that their return maps
$(C, \rho_C), (C', \rho_{C'})$ are topologically conjugate to irreducible SFTs
$(X,\sigma), (X',\sigma'),$ respectively.
Suppose that $h: Y\longrightarrow Y'$ is an equivalence such that
for each circle $\mathcal{C}$ in $Y$,
$|{\mathcal{C}} \cap C| =  |h({\mathcal{C}}) \cap C'|$.
Then  
$h$  is isotopic to an equivalence 
 induced by a topological 
conjugacy of the given SFTs, $X\to  X'$. 
\end{theorem}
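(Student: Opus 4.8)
The plan is to reduce everything to the discrete level and then apply the SFT-specific results (Proposition \ref{sftlivsic}, and the isotopy criterion Theorem \ref{isoflowprop}) in the form already prepared for this purpose. First I would invoke Theorem \ref{pstheorem} to write $h = h_2 \circ h_1$ with $h_1 \in \hpiso(Y)$ and $h_2$ induced by a morphism $(D,\rho_D) \to (D',\rho_{D'})$ between discrete cross sections $D \subset C$, $D' \subset C'$ with $h_2^{-1}(D') = D$. Since $h_1$ is an isotopy, it suffices to show $h_2$ is isotopic to an equivalence induced by a conjugacy $X \to X'$. Replacing $C,C'$ by $D,D'$ (which are still zero-dimensional cross sections with return maps conjugate to irreducible SFTs, being discrete cross sections inside the original SFTs — here one should note that a discrete cross section of an irreducible SFT has return map again conjugate to an irreducible SFT), I may as well assume $h = h_2$ is a flow map induced by a morphism $\phi : (X,\sigma) \to (X',\sigma')$ of irreducible SFTs with $h^{-1}(X') = X$, so that $\phi$ is a conjugacy (a morphism inducing an $h$ with $h^{-1}(C')=C$ restricting to a bijection on the discrete cross sections). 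The circle hypothesis $|\mathcal C \cap C| = |h(\mathcal C) \cap C'|$ then says exactly that for each periodic point $x \in X$ of $\sigma$-period $p$, the orbit of $x$ meets $C$ in $p$ points and its $h$-image meets $C'$ in $p$ points — i.e. $\phi$ preserves least periods of periodic points.

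The heart of the argument is then: given a topological conjugacy $\phi : (X,\sigma) \to (X',\sigma')$ of irreducible SFTs, together with the return-time data, produce a \emph{new} conjugacy $\psi : X \to X'$ whose induced flow equivalence is isotopic to the one induced by $\phi$, and such that $\psi$ matches ceiling functions. Concretely: the flow map $h$ induced by $\phi$ sends $[(x,t)]$ (for $0 \le t \le \tau_C(x)$) to $[(\phi(x), c(x,t))]$ where $c(x,\tau_C(x)) = \tau_{C'}(\phi(x))$. We want to replace this by the \emph{linear} flow code, i.e. the flow equivalence $h_\psi$ that on the segment over $x$ is the affine reparametrization $[(x,t)] \mapsto [(\psi(x), (\tau_{C'}(\psi(x))/\tau_C(x)) t)]$. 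By Theorem \ref{isoflowprop}, $h$ and $h_\phi$ (the linear code for the same $\phi$) differ by an element of $\hpiso$ provided one can choose a continuous $\beta$ on $C$ satisfying (3)(a),(b); part (3)(a) is automatic by continuity of $c$, and (3)(b) is the cocycle identity $\beta(\rho_C(x)) - \beta(x) = \tau_{C'}(h(x)) - \tau_C(x)$, but this fails in general because the two ceiling functions need not be cohomologous. This is where the period hypothesis enters: the function $g(x) := \tau_{C'}(\phi(x)) - \tau_C(x)$ on $X$ is locally constant (both $\tau$'s are), and by the period-preservation hypothesis $g$ sums to zero over every periodic orbit of $\sigma$ (the sum over a period-$p$ orbit of $\tau_{C'}\circ\phi$ is the total flow-time around the circle $h(\mathcal C)$, which equals the sum of $\tau_C$ around $\mathcal C$). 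By Proposition \ref{sftlivsic}, $g = b\circ\sigma - b$ for some locally constant $b : X \to \R$.

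Having such a $b$, I would build the desired conjugacy and isotopy as follows. Use $b$ to perform a "matched state-splitting/amalgamation" reparametrization: modify $\phi$ to $\psi$ so that $\tau_{C'} \circ \psi = \tau_C$ identically — intuitively, cut the mapping torus $\maptorus{X'}$ along the cross section $C'$ and re-glue via a locally constant time shift by $b$, which is a flow equivalence of $\maptorus{X'}$ to itself isotopic to the identity (apply Theorem \ref{isoflowprop} with $\beta$ the extension of $b$), and compose. After this normalization the two ceiling functions agree, so the linear flow code for $\psi$ and the map $h$ differ by an element of $\hpiso$ via Theorem \ref{isoflowprop}(3); since the linear code for a conjugacy of the bases is exactly an equivalence induced by that conjugacy, we are done. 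The main obstacle I anticipate is the bookkeeping in this last normalization step — making precise the sense in which "re-gluing $\maptorus{X'}$ by a locally constant shift $b$" is realized by an honest element of $\hpiso(\maptorus{X'})$ and interacts correctly with the cross section $C'$ and its discrete structure, so that afterward one genuinely has a flow map induced by a conjugacy of the bases. Everything else is an application of the three tools already in hand: Theorem \ref{pstheorem} to discretize, Proposition \ref{sftlivsic} to solve the cohomology equation, and Theorem \ref{isoflowprop} to convert "differ by a continuous time shift" into "isotopic in $\hpiso$".
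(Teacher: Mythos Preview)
Your reduction via Theorem~\ref{pstheorem} throws away exactly what the theorem is asking you to keep. Once you replace $C,C'$ by the smaller cross sections $D,D'$, the map $h_2$ is \emph{already} an equivalence induced by a conjugacy of return maps---namely the conjugacy $(D,\rho_D)\to(D',\rho_{D'})$ that Parry--Sullivan produces. So at that moment you are done with the ``wrong'' theorem: $h$ is isotopic to an equivalence induced by a conjugacy of \emph{some} irreducible SFTs. But Theorem~\ref{isftfebyconj} demands a conjugacy of the \emph{given} SFTs $X\to X'$, i.e.\ of the return maps to the original $C,C'$. Your subsequent discussion of ``matching ceiling functions'' and building a new $\psi$ never moves back up from $D$ to $C$; at best it would upgrade $h_2$ to a flow conjugacy between suspensions over $D$ and $D'$, which is not what is asked. (Incidentally, your justification that $g$ sums to zero---``total flow-time around $h(\mathcal C)$ equals that around $\mathcal C$''---is false for a general flow equivalence; the correct reason is the circle hypothesis on the original $C,C'$, which after normalization to unit-speed mapping tori says $\sum\tau_D=\sum\tau_{D'}\circ\phi$ over each $\rho_D$-orbit.)

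The paper avoids this trap by never descending below $X$. It normalizes to $Y=\maptorus X$, $Y'=\maptorus{X'}$, isotopes $h$ to a flow code (Theorem~\ref{isotoflowcode}), and then for \emph{every} $x\in X$ reads off $\ell(x)$, the length of the $h$-image of the unit segment over $x$; this $\ell$ is locally constant on all of $X$. The circle hypothesis gives $\sum_{i=0}^{n-1}(\ell(\sigma^ix)-1)=0$ on each periodic orbit, so Proposition~\ref{sftlivsic} solves $\ell-1=b\circ\sigma-b$ on $X$ itself. The coboundary $b$ is then extended affinely along fibers to a continuous $\beta:\maptorus X\to\R$, and $k(y)=\gamma_{\beta(y)}(y)$ lies in $\hpiso(\maptorus X)$ by Theorem~\ref{isoflowprop}. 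A direct check shows $j=h\circ k^{-1}$ is a conjugacy of suspension flows, and a final constant time-shift $\gamma_{-\tau}$ (fixed by examining one dense orbit) carries $X\times\{0\}$ onto $X'\times\{0\}$, giving the required conjugacy of the original bases. The idea you are missing is that the cocycle has to live on $X$, not on a sub-cross-section, so that the isotopy it generates aligns $C$ with $C'$ rather than $D$ with $D'$.
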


\begin{proof} 
Without loss of generality, we assume $Y=\maptorus{X}$, $Y'=\maptorus{X'}$ 
with their standard unit speed flows. 
After passage to an isotopic map, we may also assume that $h$ 
is given by a flow code. So, there is a clopen subset $C$ of 
$X$ which is a discrete cross section for $\sigma$; and if {$\sigma^i(x)\in C$} 
and $r$ is the return time of {$\sigma^i(x)$} to $C$, then 
$h$ replaces the word $x[i, i+r)$ with some word $W=W_0\cdots W_{s-1}$ 
depending continuously on $x$. The corresponding 
orbit interval of length $r$ in $\maptorus{X}$ is sent by a linear 
time change ($t\mapsto \frac sr t$) to an orbit interval of length $s$. 

Now, for  $x \in X$ there is a minimal positive 
number $\ell (x)$ such that there is  $r\in \Z$ 
such that the  map $h$ takes the 
orbit segment $\{ [(x,t)]: 0\leq t <1 \}$ bijectively to 
an orbit segment
$\{ [(y,s)]: r\leq s < r+ \ell (x)\}$. Because 
$h$ is given by a flow code, the function $\ell$ is 
locally constant on $X$. 
 Define on $X$ the continuous function 
$g(x) = \ell(x)-1$. 

If $x$ has least period 
$n$ for $\sigma$, then 
$h$ maps the circle through $[(x,0)]$  homeomorphically 
to a circle of equal length, and therefore we have
$\sum_{i=0}^{n-1} g(\sigma^i (x)) =0$.
{By Proposition \ref{sftlivsic},
 there is a locally constant 
function $b $ such that 
$g= b \circ \sigma - b$.}
Define {$\beta  :  \maptorus X\to \R$} 
by setting, for $x\in X$ and $0\leq t \leq 1$, 
\[
\beta \colon  [(x,t)]\mapsto 
b(x) + t(b(\sigma x) -b(x)) 
=b(x) +t \ell (x)\ . 
\]  
Note the definitions at  
$[(x,1)]$ and {$[(\sigma(x),0)]$} are
consistent. 
Define {$k : \maptorus X\to \maptorus X_A$} 
by $k : y\mapsto \gamma (y, \beta (y))$.  
The map $k$ sends each flow line in $\maptorus X $ 
bijectively to itself, and the function $\beta$ is 
continuous. 
It follows from Theorem 
\ref{isoflowprop} that $k$ is isotopic to the identity, 
and therefore $j:=h\circ k^{-1}$ is isotopic to 
$h$.  

Next we check that 
the map $j\colon \maptorus X \to \maptorus X'$ is a 
conjugacy of suspension flows. 
For $x\in X$, 
remembering {$k([(x,0)])=[(x,b(x))]$}, we have
\[ 
\xymatrix{
{k([(x,0)])} & & {[(\sigma(x),b(\sigma (x)))]} \ar@{=}[r]& 
 \gamma \big( k([x,0]) ,\ell (x) \big) \qquad \  \\
[(x,0)]\ar[u]^k \ar[d]_h&  & 
{[(\sigma(x),0)]}\ar[u]^k \ar[d]_h \ar@{=}[r] &
 \gamma \big([(x,0)], 1\big) \qquad \qquad \  \\ 
h([(x,0)]) & & 
{h([(\sigma(x),0)])} \ar@{=}[r] & 
\gamma' \big(h( [(x,0)]  ) , \ell (x) \big) 
}
\] 
and for $0 \leq  t  \leq 1$, we have 
\begin{align*} 
k \colon & [(x,t)] \mapsto 
\gamma {\big( k( [(x,0)]) , t\ell (x)\big)} \\ 
h \colon & [(x,t)] \mapsto 
\gamma {\big( h( [(x,0)]) , t\ell (x)\big)}. 
\end{align*}  
Consequently, 
$\gamma'\circ  j = j\circ\gamma$  at $\gamma(x,t)$ 
for all $x\in X$ and $0\leq t \leq 1$; hence 
$j$ is a conjugacy of suspension flows.  

Finally, for a point $x$ 
with dense orbit in $X$, let $\tau \in \R $ be such that 
$k$ takes $[(x,0)]$ to $[(x',\tau )]$, with $x'\in X'$ ($\tau$ is unique 
if $X$ is not just a single finite orbit.) Then for every $i$ in $\Z$, 
$\gamma_{-\tau} \circ j$ takes {$[\sigma^i(x),0]$} 
to {$[\sigma'^i(x'),0]$}. By density of the orbit in $X$, 
$\gamma_{-\tau} \circ j$ takes 
$X \times \{ 0\}$ to $X' \times \{ 0\}$, 
and therefore defines 
 a topological {conjugacy} of $\sigma$ and $\sigma'$
(given the obvious identification of $\sigma: X\to X$ with 
the return map to $X\times \{0\}$ under the suspension 
flow).

 \end{proof}

\begin{theorem} \label{isftorbittoiso}
Suppose 
$h\in \hporb ({\maptorus X)}$, 
with 
$X$ an irreducible SFT.  
Then 
$h\in \hpiso {(\maptorus X)}$. 
\end{theorem}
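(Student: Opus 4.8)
The plan is to reduce Theorem \ref{isftorbittoiso} to the already-established Theorem \ref{isftfebyconj} by recognizing that an orbit-preserving self-equivalence automatically satisfies the circle-length hypothesis of that theorem. First I would observe that $Y=\maptorus X$ with $X$ an irreducible SFT has a zero-dimensional cross section, namely $X\times\{0\}$, whose return map is (conjugate to) $\sigma$, an irreducible SFT; so the hypotheses of Theorem \ref{isftfebyconj} on the cross sections are met with $C=C'=X\times\{0\}$ and $Y=Y'$. Since $h\in\hporb(\maptorus X)$ maps each flow orbit to itself, it maps each circle $\mathcal C$ in $Y$ onto itself; in particular $h(\mathcal C)=\mathcal C$, so trivially $|\mathcal C\cap C|=|h(\mathcal C)\cap C'|$. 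Hence Theorem \ref{isftfebyconj} applies and gives that $h$ is isotopic (in $\hp(\maptorus X)$, equivalently in $\hporb$ since $Y$ is one-dimensional) to an equivalence $h'$ induced by a topological conjugacy $X\to X$.

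Next I would handle $h'$ itself. The conjugacy $\psi\colon(X,\sigma)\to(X,\sigma)$ inducing $h'$ need not be the identity, so $h'$ is generally not the identity; the point is that any self-equivalence of $\maptorus X$ induced by a conjugacy of $\sigma$ with itself and respecting the base cross section, \emph{while also lying in $\hporb$}, is isotopic to the identity. Here I would again invoke Theorem \ref{isoflowprop}: it suffices to produce a continuous $\beta\colon\maptorus X\to\R$ with $h'(y)=\gamma_{\beta(y)}(y)$. Because $h'\in\hporb$ fixes each orbit, and $h'$ is induced by a conjugacy that preserves $X\times\{0\}$ as a set with the return dynamics, on the base cross section $h'$ sends $[(x,0)]$ to $[(\psi(x),0)]$, but $[(\psi(x),0)]$ must lie on the same flow orbit as $[(x,0)]$; an orbit of the suspension flow meets $X\times\{0\}$ in the forward $\sigma$-orbit of $x$, so $\psi(x)=\sigma^{m(x)}(x)$ for some integer-valued function $m$ which is locally constant (hence bounded) by continuity of $\psi$ and clopenness. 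Then $\beta$ on $X\times\{0\}$ is $m$, and one extends it across each orbit interval of length one by the linear interpolation $\beta([(x,t)])=m(x)+t(m(\sigma x)-m(x))$ exactly as in the proof of Theorem \ref{isftfebyconj}; continuity at $[(x,1)]=[(\sigma x,0)]$ is automatic, and $h'(y)=\gamma_{\beta(y)}(y)$ holds because $h'$ is the flow-code map with a trivial (speed-one) time change off a bounded shift. Applying Theorem \ref{isoflowprop}(1)$\Rightarrow$(2) to $h'$ gives $h'\in\hpiso(\maptorus X)$.

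Finally I would assemble: $h=h'\circ q$ for some $q\in\hpiso(\maptorus X)$ (the isotopy from Theorem \ref{isftfebyconj} written as composition with an element of the path component of the identity), and $h'\in\hpiso(\maptorus X)$, so $h=h'\circ q\in\hpiso(\maptorus X)$ since $\hpiso$ is a subgroup. I expect the main obstacle to be the bookkeeping in the second paragraph: pinning down that the conjugacy $\psi$ inducing $h'$ really is of the form $\sigma^{m(\cdot)}$ \emph{because} $h'$ inherits the orbit-preserving property from $h$ through the isotopy, and checking that the speed factor $\ell$ (the $\ell(x)$ of the proof of Theorem \ref{isftfebyconj}) is identically $1$ here — i.e. that the flow-code presentation of $h'$ can be taken with no genuine stretching since $h'$ maps each circle onto itself with equal length and, being a conjugacy of suspension flows, is unit-speed. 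Once that is clear, the construction of $\beta$ and the appeal to Theorem \ref{isoflowprop} are routine.
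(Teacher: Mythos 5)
Your reduction via Theorem \ref{isftfebyconj} is exactly the paper's: the circle-count hypothesis is trivially met because $h$ maps each circle to itself, and $h'$ inherits membership in $\hporb$ from $h$ because the isotopy factor $q$ lies in $\hpiso\subset\hporb$. The genuine gap is the sentence claiming that $\psi(x)=\sigma^{m(x)}(x)$ with $m$ ``locally constant (hence bounded) by continuity of $\psi$ and clopenness.'' Nothing you have said justifies boundedness of $m$. Continuity of $\psi$ only tells you that each set $A_k=\{x:\psi(x)=\sigma^k(x)\}$ is closed, and closedness of infinitely many sets covering $X$ does not make any of them clopen, nor does it bound $m$; and a finite coding radius for $\psi$ does not by itself force $|m|$ to be small. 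This is precisely where the paper invokes a substantive theorem: an automorphism of a nontrivial irreducible SFT that fixes every periodic orbit must be a power of the shift (Boyle--Krieger, \cite{bk:firstauto}). That theorem is what your argument silently assumes.

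That said, you are in a more favorable situation than the one \cite{bk:firstauto} treats, since your $\psi$ preserves \emph{every} $\sigma$-orbit, not only the periodic ones. You can therefore replace the hand-wave with a two-line argument: pick $x_0$ with dense orbit; then $\psi(x_0)=\sigma^{k_0}(x_0)$ for a unique $k_0$, and since both $\psi$ and $\sigma^{k_0}$ commute with $\sigma$ they agree on the whole (dense) orbit of $x_0$, hence everywhere by continuity, so $\psi=\sigma^{k_0}$. Then $h'=\gamma_{k_0}$ and $\beta\equiv k_0$, and the appeal to Theorem \ref{isoflowprop} is immediate. This is in spirit the paper's parenthetical alternate proof, which chooses $\tau$ in the proof of Theorem \ref{isftfebyconj} so that $j$ fixes a point with dense orbit. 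Once you patch the gap either by citing \cite{bk:firstauto} or by the transitive-point argument, the rest of your construction of $\beta$ is correct, though it becomes unnecessarily elaborate: since $m$ is constant, $\beta$ is the constant $k_0$ and the interpolation is vacuous.
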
 

\blue{\begin{remark} \label{minimalcase}
Theorem \ref{isftorbittoiso} is also known to hold 
when $X$ is a minimal subshift 
(see \cite[Theorem 2.5]{app:shgtl} and its references). 
\end{remark}}

\begin{proof}[Proof of Theorem \ref{isftorbittoiso}]   
By Proposition \ref{isftfebyconj}, 
$h$ is isotopic to an equivalence induced by 
an automorphism of an irreducible SFT fixing every periodic orbit. 
Such an automorphism can only be a power of the shift 
\cite{bk:firstauto},
so $h$ is isotopic to the identity.  
(For an alternate proof of the corollary, take $\tau$ in the proof 
of Theorem \ref{isftfebyconj} such that $j$ maps $[(x,0)]$ to
itself.)
\end{proof} 

\begin{example} \label{soficsameorbitcounter} 
Theorem \ref{isftorbittoiso} fails for reducible SFTs  
(Example \ref{badsftexample}) and 
mixing sofic shifts  (Example \ref{badsoficexample}). 
Theorem \ref{isftfebyconj} 
also 
  becomes false if the assumption ``irreducible SFT''
  is replaced with ``irreducible sofic'' (or with 
``mixing {sofic}''). 
To see this, note that 
$h: \maptorus X \to \maptorus X$ 
in Example \ref{badsoficexample} 
takes each orbit of $\maptorus X$ into itself. 
Suppose this $h$ is induced by an automorphism $U$ 
of $X$. 
As a  factor map, $\pi$  is conjugate to the Fischer cover
of a sofic shift; therefore, there is a unique automorphism 
$\widetilde{U}$  of $X_A$ which is a lift of 
 $U$ \cite{KriegerSoficI}. Because $\widetilde{U}$ must fix 
all periodic orbits of $X_A$, except perhaps the two fixed points,  
$\widetilde{U}$ must be a power of the shift 
\cite{bk:firstauto}. Therefore  $U$ is a power of the shift, 
and therefore $h$ is isotopic to the identity. But this 
contradicts  
the conclusion of Example \ref{badsoficexample}. 
  \end{example} 

\begin{remark}
We are considering fixed-point-free flows on a mapping torus 
$Y={\maptorus X}$, with $\gamma_t $ the time $t$ map of 
the flow. 
That $X$ is a cross section to the flow means that there is a surjective 
local homeomorphism $\pi: X \times \R\to Y$ such that 
$\pi (x,t) = \gamma_t (x) $.  
In this context of considering isotopy of maps, one might hope that for a 
flow equivalence  
$h: {\maptorus X} \to {\maptorus X'}$, there might be a 
homeomorphism $\tilde h: X \times \R \to X \times \R$ such 
that $\tilde h{\circ} \pi = \pi{\circ} h$. In general, no such lift exists. 
\end{remark} 

\begin{proposition} 
\label{lifteqconj}
Suppose for $i=1,2$ that $T_i: X_i\to X_i$ is a homeomorphism 
of a zero-dimensional  compact metric space. Let $\gamma_i$ be the 
suspension flow on the 
mapping 
torus $\maptorus X_i$. 
 Let $\pi_i: X_i \times \R \to \maptorus X_i$ be the map  
$(x,t)\mapsto \gamma_i (x,t)$. 
Suppose $h: \maptorus X_1 \to \maptorus X_2$ 
is a topological equivalence of 
the suspension flows.  

Then the following are equivalent. 
\begin{enumerate} 
\item 
$h$ is  isotopic in $\hporb (Y)$
to an equivalence of flows 
$Y_1\to Y_2$ induced by a 
topological conjugacy of $(X_1,T_1)$ and  $(X_2,T_2)$. 
\item 
There is a homeomorphism $\widetilde h: X_1\times \R \to X_2\times \R$ 
such that $h{\circ}\pi_1= \pi_2{\circ} \widetilde h$. 
\end{enumerate} 
\end{proposition}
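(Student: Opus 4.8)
The plan is to prove the two implications separately, using Theorem~\ref{isftfebyconj}-style reasoning only where it helps, but really the heart is a direct construction of lifts.

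For $(1)\implies(2)$: if $h$ is induced by a topological conjugacy $\phi\colon (X_1,T_1)\to(X_2,T_2)$, then $\phi\times\mathrm{id}_{\R}\colon X_1\times\R\to X_2\times\R$ need not quite intertwine $\pi_1,\pi_2$ because the ceiling functions are both $1$ but the induced flow equivalence may reparametrize; however in the ``induced by a conjugacy'' case the map on mapping tori is literally $[(x,t)]\mapsto[(\phi(x),t)]$, so $\widetilde h:=\phi\times\mathrm{id}_{\R}$ works and $h\circ\pi_1=\pi_2\circ\widetilde h$ on the nose. Then I would handle the isotopy: if $h$ is isotopic in $\hporb$ to such an induced map $h'$, write $h=h'\circ\psi$ with $\psi\in\hpiso(Y_1)$, so $\psi(y)=\gamma_1(y,\beta(y))$ for a continuous $\beta\colon\maptorus X_1\to\R$ by Theorem~\ref{isoflowprop}. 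Lift $\beta$ to a continuous $\widetilde\beta\colon X_1\times\R\to\R$ via $\widetilde\beta=\beta\circ\pi_1$, note $\widetilde\beta(T_1 x, t-1)=\widetilde\beta(x,t)$, define $\widetilde\psi(x,t)=(x,t+\widetilde\beta(x,t))$, check it is a homeomorphism of $X_1\times\R$ (injectivity on each line from the orientation-preserving property of $\psi$, surjectivity and properness from boundedness of $\beta$), and check $\psi\circ\pi_1=\pi_1\circ\widetilde\psi$. Then $\widetilde h:=(\phi\times\mathrm{id})\circ\widetilde\psi$ does the job.

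For $(2)\implies(1)$: given $\widetilde h\colon X_1\times\R\to X_2\times\R$ with $h\circ\pi_1=\pi_2\circ\widetilde h$, first observe $\widetilde h$ must be a deck-transformation-equivariant map: $\pi_i$ is the quotient by the $\Z$-action $(x,t)\mapsto(T_i x,t-1)$, and $\pi_2\circ\widetilde h\circ(\text{$\Z$-action}_1)=h\circ\pi_1=\pi_2\circ\widetilde h$, so $\widetilde h$ commutes with the $\Z$-actions up to a deck transformation; since $\widetilde h$ is a homeomorphism of the (connected-components-indexed) covering, the ambiguity is a fixed power of the shift, and after composing with that power we may assume $\widetilde h\circ(\text{$\Z$-action}_1)=(\text{$\Z$-action}_2)\circ\widetilde h$. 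Because $h$ is a flow equivalence, $\widetilde h$ maps each line $\{x\}\times\R$ to a line $\{x'\}\times\R$ by an orientation-preserving homeomorphism; write $\widetilde h(x,t)=(\Phi(x),\theta(x,t))$ with $\Phi\colon X_1\to X_2$ continuous (continuity of $\Phi$ from continuity of $\widetilde h$ and the product structure) and $\theta(x,\cdot)$ an increasing homeomorphism of $\R$. The $\Z$-equivariance forces $\Phi\circ T_1=T_2\circ\Phi$ and $\theta(T_1 x,t-1)=\theta(x,t)-1$. Now $\Phi$ is a conjugacy $(X_1,T_1)\to(X_2,T_2)$ (a continuous bijection of compact metric spaces intertwining the homeomorphisms; bijectivity because $\widetilde h$ is a homeomorphism). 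Finally, reparametrize: set $\beta(x,t)=\theta(x,t)-t$, which descends to a continuous $\R$-valued function on $\maptorus X_1$ by the equivariance relation, and $h':=$ the flow equivalence induced by $\Phi$ satisfies $h=h'\circ\psi$ where $\psi\in\hpiso(\maptorus X_1)$ by Theorem~\ref{isoflowprop} (clause (4)); this exhibits the required isotopy.

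The main obstacle I expect is the bookkeeping in $(2)\implies(1)$ around the deck-transformation ambiguity: a priori $\widetilde h$ need only conjugate the $\Z$-action $\mathbf{a}_1$ into \emph{some} homeomorphism of $X_2\times\R$ commuting with $\pi_2$, i.e. into $\mathbf{a}_2^{\,k}$ for some integer $k$ that could conceivably vary over components of $X_1\times\R$; ruling out such variation (so that a single power of the shift fixes it) uses that $X_1\times\R$ is the universal-type cover with a single connected deck group $\Z$ acting on the whole space, and that $\widetilde h$ is a global homeomorphism, so the induced map on the deck group $\Z\to\Z$ is a fixed automorphism, necessarily $\pm1$, and orientation-preservation of $h$ pins it to $+1$. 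Once that is in hand, extracting $\Phi$ and $\theta$ and invoking Theorem~\ref{isoflowprop} is routine.
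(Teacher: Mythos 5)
Your argument for $(1)\implies(2)$ is essentially the paper's: reduce to lifting a conjugacy (which lifts trivially) and an element $\psi\in\hpiso(\maptorus X_1)$ with $\psi=\gamma_{\beta(\cdot)}$, then lift $\psi$ via $(x,t)\mapsto(x,t+\beta(\pi_1(x,t)))$. That part is fine.

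For $(2)\implies(1)$ you have identified a real subtlety --- the possible component-dependence of the deck ``ambiguity'' --- which the paper's very brief argument passes over without comment. But your proposed resolution is wrong. You assert that the deck group of $\pi_i:X_i\times\R\to\maptorus X_i$ is $\Z$ and that $\widetilde h$ therefore induces a fixed automorphism $\Z\to\Z$. When $X_i$ is disconnected (the generic case here, since $X_i$ is zero-dimensional) the deck group of $\pi_i$ is strictly larger than $\Z$: any locally constant $m:X_i\to\Z$ for which $y\mapsto T_i^{m(y)}y$ is a bijection gives a deck transformation $(y,t)\mapsto(T_i^{m(y)}y,\,t-m(y))$. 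Consequently $\widetilde h\circ\mathbf{a}_1\circ\widetilde h^{-1}$ need not be a single power of $\mathbf{a}_2$, and the integer $k(x)$ genuinely can vary with $x$. A concrete counterexample: take $X_1=X_2=\Z/3\Z$ with $T_1=T_2=(+1)$, so $\maptorus X_1=\maptorus X_2$ is a single circle of circumference $3$, and take $h=\mathrm{id}$. The map
\[
\widetilde h(0,t)=(0,t),\qquad \widetilde h(1,t)=(2,t-1),\qquad \widetilde h(2,t)=(1,t+1)
\]
is a homeomorphism of $X_1\times\R$ with $h\circ\pi_1=\pi_2\circ\widetilde h$, so it witnesses $(2)$. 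But here $\Phi=\overline h$ is the transposition $(1\ 2)$, which does \emph{not} conjugate $T_1$ to $T_2$, the associated $k$ is $(2,-1,2)$ rather than constant, and the associated $\beta$ (namely $\beta\equiv 0,-1,1$ on the three lines) does \emph{not} descend to $\maptorus X_1$.

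This example also shows that the paper's own proof of $(2)\implies(1)$ has a gap at the same point: the claim that $\beta$ ``is the lift of a continuous function on $\maptorus X_1$'' (equivalently, that $\overline h$ is automatically a conjugacy of $(X_1,T_1)$ and $(X_2,T_2)$) is false for an arbitrary lift $\widetilde h$. The proposition is still correct, but proving $(2)\implies(1)$ requires an extra step: one must first replace the given $\widetilde h$ by a suitably normalized lift, or, alternatively, argue via the cross-section $h^{-1}(\pi_2(X_2\times\{0\}))$ and show that $\widetilde h$ forces this cross-section to be carried to $\pi_1(X_1\times\{0\})$ by an element of $\hpiso(\maptorus X_1)$, after which $h$ composed with that element restricts to the desired conjugacy of base systems. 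Neither your sketch nor the paper's paragraph, as written, supplies this step, and the ``fixed automorphism of $\Z$'' reasoning you offer to close the gap does not hold.
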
 
\begin{proof} 
$(1)\implies (2){:}$ 
From the  isotopy, there is 
a $j\in \hpiso (\maptorus X_1)$ such that  
$h\circ j$ is a conjugacy of suspension 
flows sending the cross section $\pi_1 (X_1\times \{0\} )$ onto
$\pi_2 (X_2\times \{0\} )$. Let $k\colon X_1 \to X_2$  be the 
homeomorphism such that for $x$ in $X_1$, 
$h\circ j \colon \pi_1 (x,0) \to \pi_2(k(x),0)$. 
Then the map $X_1\times \R \to X_2 \times \R$ 
defined by $(x,t)\mapsto (k(x),t)$ is a lift of 
$h\circ j$. 
It now suffices to check that there is a lift $\widetilde j$ of $j$.  
By Theorem \ref{isoflowprop}, 
there is a continuous function $\beta : \maptorus X_1 \to \mathbb R$ 
such that for all $x$ in $X_1$ and $t\in \mathbb R$, 
 $j \colon [(x,t)] \mapsto \gamma ([(x,t)], \beta (x,t) )$. 
Define $\widetilde j \colon X_1\times \R \to X_1\times \R$ by 
$\widetilde j \colon (x,t)\mapsto (x, t+ \beta (\gamma_t (x)) $. 
Then $\widetilde j$ is continuous; 
$j{\circ}\pi_1= \pi_1{\circ} \widetilde j$; for each $x\in X_1$, 
$\widetilde j$ is bijective on  $x\times \R$; and   
$\widetilde j$ is a homeomorphism.

%
%
$(2) \implies (1){:}$ 
The given lift $\widetilde h: \ X_1 \times \R\ \to \  X_2 \times \R $ 
has the form $\widetilde h: (x,t) \mapsto ( 
\overline{h}(x), t + \beta (x,t) )$, 
with  
$\beta $  continuous 
and $\overline{h}: X_1 \to X_2$ a homeomorphism. 
Because $\pi_1$ is a local homeomorphism and 
$h{\circ}\pi_1= \pi_2{\circ} \widetilde h$, it follows that $\beta $ 
is the lift of a continuous function (also denoted $\beta$) on 
$\maptorus X_1$: for all $x \in X_1$ and all $t$, 
$h\colon  \gamma (x,t) \mapsto [ ( \overline h(x) , t+\beta (x,t) ) ]$. 
Define $j: \maptorus X_1 \to  \maptorus X_1 $ by $y \mapsto 
\gamma (y,\beta (y) )  $.  
Because $\widetilde h$ is orientation preserving, for every 
$x$ in $X$ we have  
\[
s<t \implies  s+ \beta (x,s) < t+\beta (x,t).
\] 
Consequently $j \in \hp  (\maptorus X_1)$, and by 
Theorem \ref{isoflowprop} it follows that 
$j \in \hpiso (\maptorus X_1)$. 
Therefore 
$h$ is  isotopic in $\hporb (Y)$ to 
$h\circ j^{-1}$, which  
lifts to the map 
$ (x,t) \mapsto (\overline h (x), t)$.  
Therefore the equivalence 
$h\circ j^{-1}$ is a conjugacy of flows induced by 
the 
topological conjugacy  $(X_1,T_1) \to (X_2,T_2)$  
defined by $\overline h$. 
\end{proof} 
\begin{remark} 
If in Proposition \ref{lifteqconj}(2)  we only require $\widetilde h$ to 
be continuous instead of a homeomorphism, then 
$\widetilde h$ can easily be constructed: first define on $X_1 \times
\{0\}$, then extend. This $\widetilde h$ need 
not be 
surjective and need not be injective (even if $X_1=X_2$). 
However, $\pi_2\circ
\widetilde h$ will be surjective. 
\end{remark}

\section{Extending equivalences and cross sections}
\label{sec:extension}
We finish
with a pair of extension results for flows with zero-dim\-en\-sio\-nal cross sections.
The definition of 
$\hpiso (Y)$ is in 
{Definition} \ref{groupdefns}. 

\begin{prop}\label{extendisos}
Suppose a flow on a compact metric space $Y$ has a zero-dimensional 
cross section. 
Suppose $Y'$ in $Y$ is the domain of 
a subflow of $Y$ and  $\chi \in \hpiso (Y')$. 
Then $\chi$ extends to 
$\widetilde{\chi}:Y\to Y$ such that 
$\widetilde{\chi} \in \hpiso (Y)$. 
\end{prop}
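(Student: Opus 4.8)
The plan is to use Theorem \ref{isoflowprop} in both directions: first to convert the hypothesis $\chi \in \hpiso(Y')$ into a continuous "time-displacement" function $\beta'$ on $Y'$ with $\chi(y) = \gamma_{\beta'(y)}(y)$, then to extend $\beta'$ to a continuous function $\beta$ on all of $Y$, and finally to feed $\beta$ back into Theorem \ref{isoflowprop} (part (4)) to get $\widetilde\chi(y) = \gamma_{\beta(y)}(y)$ lying in $\hpiso(Y)$. The first step is immediate from $(2)\Rightarrow(1)$ of Theorem \ref{isoflowprop}, applied to the subflow on $Y'$; note $Y'$ is compact (it should be a closed invariant subset, hence compact) and the flow on $Y'$ has no fixed points, so the theorem applies. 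The last step is immediate from part (4) of Theorem \ref{isoflowprop}: once we have a continuous $\beta\colon Y\to\R$ extending $\beta'$, the formula $\widetilde\chi = h_1$ with $s=1$ gives a self-equivalence of the flow on $Y$ isotopic to the identity, and it restricts to $\chi$ on $Y'$ by construction.

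So the entire content is the middle step: extending the continuous function $\beta'\colon Y'\to\R$ to a continuous function $\beta\colon Y\to\R$. Since $Y'$ is a closed subset of the compact metric (hence normal) space $Y$ and $\R$ is an absolute retract, the Tietze extension theorem gives a continuous extension immediately. I would simply cite Tietze. (If one wanted to avoid even that and stay within the paper's toolkit, one could instead work with the zero-dimensional cross section $C$ of $Y$ guaranteed by the hypothesis: $C\cap Y'$ is a cross section of $Y'$, and on a zero-dimensional cross section a continuous function is locally constant, so extending it clopen-set by clopen-set across $C$ and then spreading it over flowboxes via the return-time data — as in the proof of part (3) of Theorem \ref{isoflowprop} — gives a concrete extension; but Tietze is cleaner.)

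The one genuine subtlety, and the step I expect to be the main obstacle, is making sure the extended map $\widetilde\chi$ is actually a homeomorphism of $Y$, i.e., that $\gamma_{\beta}$ is injective (equivalently, that inequality \eqref{orbineq} from the proof of Theorem \ref{isoflowprop} holds) on every orbit of $Y$, not just on orbits inside $Y'$. An arbitrary Tietze extension $\beta$ need not satisfy this — a wild extension could fold an orbit back on itself. The fix is to damp the extension near $Y'$ and keep it small elsewhere. Concretely: since the flow on $Y$ has a zero-dimensional cross section, the return time $\tau_C$ to $C$ is bounded below by some $\mu>0$; pick a continuous $\lambda\colon Y\to[0,1]$ with $\lambda\equiv 1$ on $Y'$, and replace the raw Tietze extension by a rescaled one that is bounded in absolute value by, say, $\mu/4$ on $Y\setminus Y'$ while still agreeing with $\beta'$ on $Y'$ — one can always arrange this because $\beta'$ itself, being the time-displacement of an isotopically trivial map, can first be made small (by subdividing the isotopy, exactly as in the proof of $(2)\Rightarrow(1)$ in Theorem \ref{isoflowprop}, or by noting that isotopic triviality only needs $\beta'$ modulo the return map). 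With such a bounded extension, the required injectivity inequality on any orbit segment of length exceeding $2\cdot(\mu/4)$ is automatic, and on shorter segments one is inside a single flowbox where the bound $|\beta| < \mu/4$ directly forces order-preservation; periodic orbits are handled exactly as in the circle case of the proof of Theorem \ref{isoflowprop}. Thus $\widetilde\chi = \gamma_\beta \in \hp(Y)$, and Theorem \ref{isoflowprop}(1)$\Rightarrow$(2) upgrades this to $\widetilde\chi\in\hpiso(Y)$, completing the proof.
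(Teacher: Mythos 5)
Your first two paragraphs set up the right frame (Theorem \ref{isoflowprop} in both directions, Tietze in the middle), and you correctly single out the genuine difficulty: an arbitrary Tietze extension $\beta$ need not make $y\mapsto\gamma_{\beta(y)}(y)$ a homeomorphism. But the fix you offer does not close that gap. The orientation condition \eqref{orbineq} is $\beta(\gamma_r(y))-\beta(\gamma_t(y))<t-r$ for all $r<t$; when $t-r$ is tiny, a uniform bound $|\beta|<\mu/4$ only gives $\beta(\gamma_r(y))-\beta(\gamma_t(y))<\mu/2$, which is not $<t-r$. A continuous $\beta$ that is small in supremum norm can still oscillate arbitrarily fast along an orbit and violate \eqref{orbineq} on arbitrarily short segments, so the claim that ``the bound $|\beta|<\mu/4$ directly forces order-preservation'' on short segments is simply false. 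What you need is control on the \emph{variation} of $\beta$ along orbits (a one-sided Lipschitz-type bound), not on $|\beta|$. The subdivision-of-the-isotopy trick addresses the size of $\beta'$ on $Y'$, but it does nothing to tame the oscillation of its extension off $Y'$, so the problem persists piece by piece.

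The paper's proof uses the zero-dimensional cross section $X$ precisely to supply the control you are missing. It restricts the displacement $\phi$ to $X':=X\cap Y'$, uses zero-dimensionality to build a clopen ``retraction'' $\pi\colon Q\to X'$ from a clopen neighborhood $Q\supset X'$ (via nested clopen partitions and chosen base points $z_{n,C}\in X'$), and extends by $\alpha(x)=\phi([\pi(x),0])$. Because $\pi(x)$ and $\pi(Tx)$ are genuine points of $X'$ uniformly close to $x$ and $Tx$, uniform continuity of $\phi$ gives $|\alpha(x)-\phi([x,0])|<\mu/3$ etc., where $\mu=\min_{X'}\bigl(1+\phi([x,1])-\phi([x,0])\bigr)>0$; the positivity of $1+\alpha(Tx)-\alpha(x)$ then follows by the triangle inequality, not from smallness of $\alpha$. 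Away from $Q$ the paper sets $\alpha=0$ and chooses the cross section so that the return time is at least $M>\max|\alpha|$ where the two regimes meet, which handles the transition. In short, the extension is \emph{dynamically adapted}: the extended displacement at $x$ is the true displacement of a nearby $X'$-orbit, so the order inequality is inherited. A raw Tietze extension, no matter how small, carries no such structure, and your proof would need some replacement mechanism (e.g., averaging along the flow to bound the orbitwise Lipschitz constant) before it could go through.
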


\begin{proof} 
Without loss of generality, 
we assume $Y$ is the mapping torus $\maptorus{X}$ 
of a homeomorphism $T:X\to X$ of {a zero-dimensional 
compact metric space} $X$.

By Theorem \ref{isoflowprop}, 
there is a continuous map  $\phi: Y' \to\R$ such that 
\begin{equation*}
	\chi(y)=\gamma_{\phi(y)}(y)
\end{equation*}	
for $y\in Y'$.  
By Theorem \ref{isoflowprop},  it suffices to extend $\phi$ to a continuous map
$\tilde{\phi}:\maptorus{X}\to\R$ such that 
the function $\tilde{\chi} : x\mapsto\gamma_{\tilde{\phi}(x)}(x)$ 
maps each orbit of $\maptorus{X}$ to itself 
by an orientation preserving  homeomorphism. 

Let $X'=\{x\in X:[x,0]\in Y'\}$. Then $X'$ is 
$T$-invariant and $Y'
=\{[y,t]:x\in X',\ t\in\R\}$. Since $\chi$ is orientation preserving,
we have for all $x$ in $X'$ that  
\begin{equation*}
	1+\phi([x,1])-\phi([x,0])>0.
\end{equation*}	
By the continuity of $\phi$ and compactness of $Y'$, 
there is a $\mu > 0$ such  that 
\begin{equation*}
	\mu =\min\{1+\phi([x,1])-\phi([x,0]):x\in X'\}.
\end{equation*}
Let $\mathcal P_n$, $n \in \mathbb N$, be a sequence of 
nested finite partitions of $X$ into clopen sets, with 
$\text{diameter} (C) < 1/n$ for all $C$ in $\mathcal P_n$. 
Let $\mathcal Q_n=\{C\in \mathcal P_n \colon 
C\cap X' \neq \emptyset\}$ .  Pick $N$ such that 
for any pair $x,z$ from $X'$, 
\[
\text{dist} (x,z) < 1/N 
\implies 
|\phi ([x,0]) -\phi ([z,0]) | < \mu /3. 
\] 
From each $C$ in $\mathcal Q_n$ with $n\geq N$, pick a point 
$z_{n,C}$ in $X'$. 
Let $Q$ be the clopen set in $X$ which is the union of the $C$ 
in $\mathcal Q_N$. 
Define  continuous maps $\pi \colon Q \to X'$ 
and $\alpha : Q \to \mathbb R$ by 
\begin{align*} 
\pi (x) &= 
\begin{cases}
	x&\text{if } x\in X', \\
	z_{n,C}&\text{if } x\in C \in \mathcal Q_n 
\text{ and } x\notin \bigcup_{C\in Q_{n+1}} C, 
\end{cases}
\\ 
\alpha (x) &= \phi ([\pi x,0] ).
\end{align*} 
Note, if both $x$ and $T(x)$ are in $Q$, then 
\begin{align*}
1+\alpha &(T(x)) - \alpha (x) \ 
=\  1 + \phi ( \pi (T(x))-\phi ( \pi (x))\\ 
=&\ 1+\big( \phi ( \pi (T(x))-\phi ( T (x))\big)\\
&\quad + \big(\phi ( T (x)) -\phi (x) \big)
+\big( \phi(x) -\phi ( \pi (x)) \big) 
\\ 
 >& \ \big(1+\phi ( T (x)) -\phi (x) \big) -2\frac{\mu }{3} 
\ >\ \frac{\mu }{3}
%
%
%
\end{align*} 
and therefore
\begin{align}\label{positivedifference}  
 1+&\alpha (T(x)) - \alpha (x) > 0 .
\end{align} 
%
%
Choose an integer $M$ such that 
\begin{equation} \label{Mchoice}
M>\max\{|\alpha(x)|:x\in Q \}  .
\end{equation}
Define clopen subsets of $X$, 
\begin{align*} 
Q^{(M)} \ &=\ \bigcap_{-M\leq n \leq M} T^nQ 
\\
W\ & =\ \{ x\in X: |n|<M \implies 
T^n(x)\notin Q^{(M)} \} \\
V\  &  =\  W\cup Q^{(M)} . 	
\end{align*} 
Extend $\alpha$ to 
a continuous function on $V$ by setting 
$\alpha(w)=0$ for $w\in W$.

The set $C:=\{[v,0]:v\in V \}$ 
is a cross section for $\maptorus{X}$. For $v\in V$, let $\tau_C(v)$
be the return time of $[v,0]$ to $C$ under the flow, and let $R_C(v)$
be the unique element $v'\in V$ for which
$[v',0]=\gamma_{\tau_C(v)}([v,0])$. 
We make the Claim: for all $v$ in $V$, 
\begin{equation} \label{posorbclaim}
\tau_C(v) + \alpha (R_C(v)) -\alpha (v)  >0  . 
\end{equation} 
We check the claim by four cases.  

{\it Case I:} $v$ and $R_C(v)$ belong to $Q^{(M)}$. \\
Here 
$\tau_C(v) + \alpha (R_C(v)) -\alpha (v)  
=1 +\alpha (T(v)) -{\alpha (v)>0}$ by
\eqref{posorbclaim}. 

{\it Case II :} $v\in W$ and $R_C(v)\in W$. \\ 
Here $\tau_C(v) + \alpha (R_C(v)) -\alpha (v)  
=1 +0-0>0$ . 

{\it Case
  III :} $v\in Q^{(M)}$ and $R_C(v) \in W$. \\ 
This is the case that 
$v\in Q^{(M)}$; 
 $T^M(v) \notin Q$; and  $0<n<M \implies T^n(v) \in Q$. 
Then 
$\tau_C(v) + \alpha (R_C(v)) -\alpha (v)  = 
M  + 0 - \alpha (v) $, which is positive 
by \eqref{Mchoice}.  

{\it Case
  IV :} $v\in  W$ and $R_C(v) \in Q^{(M)}$. \\ 
The argument is very similar to that of Case III. 
The Claim is proved.

From here, the notation $[v,s]$ refers to $v\in V$ and 
$0\leq s \leq \tau_C(v)$.  
We define $\widetilde{\chi}:Y\to Y$ by setting
\[
\widetilde{\chi}: [v,s] \mapsto 
[v, \alpha (v) + \frac s{\tau_C(v)} 
\big( \tau_C (v) + \alpha (R_C(v)) - \alpha (v) \big) ]  . 
\] 
The definition is consistent because 
$[v,\tau_C(v)]\mapsto \big[ [v,\tau_C(v)] ,  \alpha (R_C(v) ) \big]
$ 
is in agreement with 
$[R_C(v),0] \mapsto [R_C(v), \alpha (R_C(v))]$. 
The map $\widetilde{\chi}$ is continuous. 
It then follows from  the Claim that 
$\widetilde{\chi}$ sends each orbit to itself by a  
map which is piecewise 
(hence globally) 
an orientation preserving homeomorphism. Finally, 
define 
\[
\widetilde{\phi}:[v,s] \mapsto 
\Bigg(\frac {s}{\tau_C(v)}\Bigg) \alpha (R_C(v)) + 
\Bigg(1-\frac {s}{\tau_C(v)} \Bigg) \alpha (v)   . 
\] 
Then $\widetilde{\phi}$ is continuous on $Y$ and 
\[
\widetilde{\chi}: [v,s] \mapsto 
\gamma \big([v,s], \widetilde{\phi} ( [v,s] ) \big)  .
\] 
This completes the proof that 
$\widetilde{\chi}  \in  \hpiso (Y)$.  
\end{proof}

We do not know if 
the assumption of a zero-dimensional cross section 
in Proposition \ref{extendisos} is necessary. 

\begin{proposition}\label{extendsec}
Suppose $\gamma $ is a flow on a one 
dimensional compact metric space $Y$; 
$Y'$ is compact and invariant in $Y$;  
and $C'$ is a cross section for  
the subflow on $Y'$. 
Then there is a cross section $C$ for $Y$ 
such that $C\cap Y' =C'$. 
\end{proposition}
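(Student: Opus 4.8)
### Proof proposal

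The plan is to build $C$ by extending $C'$ from $Y'$ out to the rest of $Y$, one flowbox at a time, using the local cross section machinery of Theorem \ref{localcross} and the disjointification technique from the proof of Proposition \ref{zerocross}. First, since $Y$ is one-dimensional, Proposition \ref{zerocross} furnishes \emph{some} cross section $C_0$ of $Y$; it will be zero-dimensional, and so (by the paragraph after Definition \ref{discretedefn}) it can be taken to be a finite union of disjoint clopen pieces on which the flow gives flowbox neighborhoods of a uniform width $\epsilon$. The cross section $C'$ of the subflow on $Y'$ is likewise zero-dimensional, and after flowing it slightly (applying a small $\gamma_\nu$, as in Lemma \ref{disjointify}(2)) I may assume $C' \cap C_0 = \emptyset$ — or rather I will need the reverse: I want to keep $C'$ fixed and build $C$ around it, so instead I would push $C_0$ off of $C'$ by applying Lemma \ref{disjointify} inside $Y$ relative to the cross section $C'' $ of $Y$ obtained by extending $C'$ \emph{continuously but not yet as a cross section}. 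The cleanest route: first enlarge $C'$ to a closed set $\widehat C$ in $Y$ that is still ``section-like'' near $Y'$, then repair it away from $Y'$.

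More concretely, here are the steps I would carry out. \textbf{Step 1.} For each point $p \in Y \setminus Y'$ whose orbit meets $Y'$ only... no — simply cover $Y \setminus (\text{a neighborhood of } C')$ by finitely many flowboxes $\gamma(F_i \times (-\epsilon,\epsilon))$ with $F_i$ clopen zero-dimensional local sections, chosen (via Theorem \ref{localcross}, valid since the flow is fixed-point free and $Y$ compact so periods are bounded below) small enough that each $F_i$ is disjoint from $Y'$. \textbf{Step 2.} Run the disjointification iteration from the proof of Proposition \ref{zerocross}: starting from the closed set $C' \cup F_1 \cup \cdots \cup F_N$, successively shrink each $F_i$ to a clopen subset so that the resulting pieces (together with $C'$) give pairwise disjoint flowboxes of a common small width $\epsilon' > 0$, while ensuring every orbit of $Y$ still meets $C' \cup \bigcup F_i$. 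The key point is that in this iteration I never touch $C'$: whenever a piece $F_i$ overlaps $C'$ inside a flowbox, I excise from $F_i$ the clopen set of points that flow into $C'$ within time $\epsilon'$, exactly as pieces are excised from each other. Because $C'$ is itself a cross section of $Y'$, its return time (within $Y'$) is bounded below, and by continuity the flowbox $\gamma(C' \times (-\epsilon',\epsilon'))$ is a genuine flowbox in $Y$ for $\epsilon'$ small; so $C'$ behaves just like one of the $F_i$ in the disjointification. \textbf{Step 3.} Set $C = C' \cup \bigcup_i F_i^{\text{final}}$. Then $C$ is closed, zero-dimensional, $\gamma: C \times \R \to Y$ is surjective (every orbit meets $C$), and $\gamma: C \times (-\epsilon',\epsilon') \to Y$ is a homeomorphism onto an open set, so by Remark \ref{equivtocross}(3) $C$ is a cross section of $Y$. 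By construction $C \cap Y' \supseteq C'$, and the excision step guarantees no point of $F_i^{\text{final}}$ lies in $Y'$ near $C'$ — but I must still rule out $F_i^{\text{final}} \cap Y' \neq \emptyset$ away from $C'$.

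The main obstacle is precisely that last point: ensuring $C \cap Y' = C'$ \emph{exactly}, not merely $\supseteq$. A point $z \in F_i^{\text{final}} \cap Y'$ would give a point of $Y'$ lying on $C$ but not on $C'$, and since $C'$ is a cross section of the subflow $Y'$, the orbit of $z$ (inside $Y'$) meets $C'$; if that meeting happens within time $\epsilon'$ of $z$, then $z$ was already excised, contradiction — so the fix is to choose the $F_i$ in Step 1 with the additional property that $F_i \cap Y' = \emptyset$ outright. This is possible because $Y'$ is compact and the local sections $F_i$ may be taken inside arbitrarily small flowbox neighborhoods of points of $Y \setminus Y'$; more carefully, cover the compact set $K := \{ y \in Y : \gamma(y,t) \notin C' \text{ for all } |t| \le \epsilon'\} \setminus (\text{flowbox of } C')$, which is disjoint from $C'$ and (after possibly shrinking) one checks is covered by flowboxes avoiding $Y'$, because any orbit segment of length $< \epsilon'$ in $Y'$ that avoids $C'$ can be retracted to $C'$. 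I would need to verify this covering claim with a short compactness argument — checking that points of $Y'$ not on $C'$ have flowbox neighborhoods meeting $Y'$ only in the forward/backward flow of $C' \cap Y'$, hence such neighborhoods can be trimmed away from $K$'s cover — and that is where the real care lies. Everything else is a routine adaptation of the disjointification in Proposition \ref{zerocross}, together with Remark \ref{equivtocross} to certify the outcome is a cross section.
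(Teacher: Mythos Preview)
There is a genuine gap at precisely the point you identify as the ``main obstacle,'' and your proposed fix cannot work. First, $\gamma(C'\times(-\epsilon',\epsilon'))$ is contained in the flow-invariant set $Y'$, so it is \emph{not} open in $Y$; thus $C'$ does not ``behave just like one of the $F_i$'' in the disjointification argument of Proposition~\ref{zerocross}, which requires open flowboxes. Your open cover of $Y$ must therefore consist entirely of flowboxes $\gamma(F_i\times(-\epsilon,\epsilon))$ with nonempty interior in $Y$, and these must cover all of $Y'$. But if $p\in Y'$ lies in such a flowbox, say $p=\gamma(f,s)$ with $f\in F_i$ and $|s|<\epsilon$, then $f=\gamma(p,-s)\in Y'$ by flow-invariance, so $F_i\cap Y'\neq\emptyset$. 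Since $\epsilon'$ must be taken smaller than half the minimum return time to $C'$, there are points of $Y'$ at flow-distance greater than $\epsilon'$ from $C'$; these lie in your set $K$, and any $F_i$ whose flowbox covers them contributes points to $(C\cap Y')\setminus C'$. Your suggestion to ``trim away'' such $F_i$ from the cover of $K$ would simply leave those points of $Y'$ uncovered.

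The paper's proof avoids this obstruction by a two-step maneuver absent from your sketch. Fixing a global cross section $X$ of $Y$ (so $X'=X\cap Y'$ is a cross section of $Y'$), it applies Lemma~\ref{disjointify} \emph{within} $Y'$ to obtain $q\in\hpiso(Y')$ and a finite set $T\subset\R$ with $q(C')\subset\bigcup_{t\in T}\gamma(X'\times\{t\})$. The resulting clopen pieces of $X'$ extend to clopen subsets of the zero-dimensional space $X$, and after adding further pieces of $X$ disjoint from $Y'$ one obtains a cross section $D$ of $Y$ with $D\cap Y'=q(C')$. The step you are missing is the final one: Proposition~\ref{extendisos} extends $q^{-1}\in\hpiso(Y')$ to some $\widetilde\chi\in\hpiso(Y)$, and then $C:=\widetilde\chi(D)$ is a cross section of $Y$ with $C\cap Y'=\widetilde\chi(q(C'))=C'$. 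In short, rather than building around $C'$ directly, the paper straightens $C'$ by an isotopy of $Y'$, extends the straightened version, and then undoes the isotopy globally via Proposition~\ref{extendisos}.
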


\begin{proof}  
Let $X$ be a zero-dimensional cross section
for the flow on the one-dimensional space $Y$ 
(Proposition \ref{zerocross}).
Then $X'= Y' \cap X$ is a cross section 
for the subflow on $Y'$. For simplicity and 
without loss of generality, suppose $Y$ is 
the mapping torus $\maptorus X$. 

By Lemma \ref{disjointify}, 
there exists a finite subset $T$ of $\R$ and 
$q\in \hpiso (Y)$ such that 
$q(C') \subset \{\gamma (x, t) : x\in X' , t\in T \}$. 
For $t\in T$, define $D'_t = \{ x\in X' : \gamma (x,t) \in q(C')\}$ 
and $D' = \cup_t D'_t$.  
Each $D'_t$ is clopen in $X'$.
Choose clopen subsets $D_t$ of $X$ such that 
$D_t\cap X' = D'_t$; then 
\[
Y' \cap (\cup_t \gamma (D_t \times \{t\}) ) 
= Y' \cap q(C') .
\] 
Let $D_X = \cup_{t\in T}D_t$, a clopen subset of $X$.  
Let $R$ be the maximum return time under $\gamma'$  to  
$D'$ and set 
$E=X \cap \gamma (D_X\times [0,R])$. 
  Then 
$E$ is clopen in $X$  
and $Y' \cap E=\emptyset$.  
  Define 
\[
D = \gamma (E\times \{ 0\}) \cup (\cup_t \gamma (  D_t\times \{t \}) ) . 
\] 
Then $D$ is a cross section for the flow on $Y$ and 
$D\cap Y' =
q(C')$. 

Let $\chi = q^{-1} \in \hpiso (Y')$.  
By Proposition \ref{extendisos}, 
$\chi $ extends to $\widetilde{\chi}$ in 
$\hpiso (Y)$. 
 Now 
$\widetilde{\chi} (D)$ is a cross section 
for the flow on $Y$   
 and 
$ Y' \cap \widetilde{\chi} (D) = \widetilde{\chi} (Y' \cap D)  
= {\widetilde{\chi} (q(C'))}  = C'$. 
 \end{proof}

%






\mbox{}\\ \small
\noindent
\textsc{Department of Mathematics, 
University of Maryland,
College Park, MD 20742-4015, USA}

\emph{E-mail address: }\texttt{mmb@math.umd.edu}\\[0.5cm]

\noindent
{\textsc{Department of Science and Technology, University of the Faroe Islands, N\'oat\'un 3, FO-100 T\'orshavn, the Faroe Islands}}

{\emph{E-mail address: }\texttt{toke.carlsen@gmail.com}}\\[0.5cm]

\noindent
\textsc{Department of Mathematical Sciences,  University of Copenhagen, DK-2100 Copenhagen \O, Denmark}

\emph{E-mail address: }\texttt{eilers@math.ku.dk}\\

\end{document}